\newtheorem{theorem}{Theorem}[section]
\newtheorem{corollary}[theorem]{Corollary}
\newtheorem{lemma}[theorem]{Lemma}
\newtheorem{conjecture}[theorem]{Conjecture}
\theoremstyle{definition}
\newtheorem{remark}[theorem]{Remark}
\numberwithin{equation}{section}
\newcounter{minutes}\setcounter{minutes}{\time}
\newcounter{hours}\setcounter{hours}{\time}
\newcommand{\bpf}{\begin{pf}}
\newcommand{\epf}{\end{pf}}
\newcommand{\beqq}{\begin{eqnarray*}}
\newcommand{\eeqq}{\end{eqnarray*}}
\begin{document}

\bibliographystyle{amsplain}

\title
{Interpolation on Gauss hypergeometric functions with an application}

\def\thefootnote{}
\footnotetext{ \texttt{\tiny File:~\jobname .tex,
          printed: \number\day-\number\month-\number\year,
          \thehours.\ifnum\theminutes<10{0}\fi\theminutes}
} \makeatletter\def\thefootnote{\@arabic\c@footnote}\makeatother

\author[H. M. Arora]{Hina Manoj Arora}
\address{Hina M. Arora, Discipline of Electrical Engineering,
Indian Institute of Technology Indore, Simrol, Khandwa Road,
Indore 453 552, India. \newline
\hspace*{0.45cm}Present Address: MS student,
Department of Applied Mathematics \& Statistics,
Stony Brook, NY-11794-3600}
\email{hina.arora.256@gmail.com}
\email{hina.arora@stonybrook.edu}

\author[S. K. Sahoo]{Swadesh Kumar Sahoo$^*$}
\address{Swadesh Kumar Sahoo, Discipline of Mathematics,
Indian Institute of Technology Indore, Simrol, Khandwa Road,
Indore 453 552, India.}
\email{swadesh@iiti.ac.in}

\thanks{${}^*$ The corresponding author}

\begin{abstract}
In this paper, we use some standard numerical techniques to approximate the hypergeometric function 
$$
{}_2F_1[a,b;c;x]=1+\frac{ab}{c}x+\frac{a(a+1)b(b+1)}{c(c+1)}\frac{x^2}{2!}+\cdots
$$
for a range of parameter triples $(a,b,c)$ on the interval $0<x<1$. 
Some of the familiar hypergeometric functional identities and asymptotic behavior of the hypergeometric function 
at $x=1$ play crucial roles in deriving the formula for such approximations.
We also focus on error analysis of the numerical approximations leading to monotone properties of 
quotient of gamma functions in parameter triples $(a,b,c)$. 
Finally, an application to continued fractions of Gauss is discussed followed by concluding remarks
consisting of recent works on related problems.
\\
\vspace*{-0.2cm}

\noindent
{\bf 2010 Mathematics Subject Classification}. Primary 65D05; Secondary 33B15, 33B20, 33C05, 33F05

\smallskip
\noindent
{\bf Key words and phrases.} Interpolation, hypergeometric function, gamma function, 
error estimate.
\end{abstract}

\maketitle

\pagestyle{myheadings}
\markboth{H. M. Arora and S. K. Sahoo}{Interpolation on Gauss hypergeometric functions}

\section{Introduction and Preliminaries}
For a complex number $z$ and $c\neq 0,-1,-2,-3,\ldots$,
the {\em hypergeometric series }is defined by:
$$ 
1+ \sum_{n=1}^\infty
\frac{(a)_n(b)_n}{(c)_n(1)_n} z^n.
$$
Here $(a)_n$ denotes the shifted factorial notation defined, in terms of the gamma function,
by:
$$(a)_n=\frac{\Gamma(a+n)}{\Gamma(a)}=\left\{
\begin{array}{ll}a(a+1) \cdots (a+n-1) & \mbox{if $n\ge 1$};\\
1 & \mbox{if $n=0$, $a\neq 0$.}\end{array}\right.
$$

Note that the hypergeometric series defines an analytic function,
denoted by the symbol $_2F_1[a,b;c;z]$, in $|z|<1$. 
As quoted in the historial remarks in \cite[1.55, p.~24]{AVV97}, the concept of 
hypergeometric series was first introduced by J. Wallis in 1656 to refer to a generalization of the geometric series.
Less than a century later, Euler extensively studied the analytic properties of
the hypergeometric function and found, for instance, its integral representation (see \cite[Theorem 1.19 (2)]{AVV97}. 
Gauss made his first contribution to the subject in 1812. Due to the outstanding contribution made by Gauss to the field,
the hypergeometric function is also sometimes known as
the {\em Gauss hypergeometric function}. Most elementary functions which are solutions to
certain differential equations, can be written in terms of 
the Gauss hypergeometric functions.
One can easily verify by using Frobenius technique that the function $_2F_1[a,b;c;z]$
is one of the solutions of the {\em hypergeometric differential equation} \cite{AAR99,BW10,RV60} 
$$z(1-z)w''+(c-(a+b+1)z)w'-abw=0.
$$
We refer to \cite{RV43,RV60} for Kummer's 24 solutions to the hypergeometric 
differential equation, and to \cite{BW10} for related applications.
The asymptotic behavior of $_2F_1[a,b;c;z]$ near $z=1$ reveals that:
\begin{equation}\label{eq2}
_2 F_1[a,b;c;1]= \frac{\Gamma(c-a-b) \Gamma(c)}{\Gamma(c-a) \Gamma(c-b)}<\infty,
\quad \mbox{valid for ${\rm Re}\,(c-a-b)>0$}.
\end{equation}

Interpolating polynomials for elementary real functions such as trigonometric functions, logarithmic
function, exponential function, etc. have already been derived in undergraduate texts
in Numerical Analysis; see for instance \cite{A89}. These elementary functions are in fact
hypergeometric functions with specific parameters $a,b,c$ (see for instance \cite{AAR99,RV60}). 
Most of such polynomial approximations
are computed when the functional values at the given boundary points are possible.
Hence the asymptotic behaviour (\ref{eq2}) of the hypergeometric function near $z=1$
motivates us to construct interpolating polynomials for real hypergeometric functions 
$_2F_1[a,b;c;x]$, $a,b,c\in \mathbb{R}$, $c\not\in\{0,-1,-2,-3,\ldots\}$,
of a real variable $x$ using several numerical techniques in the interval $[0,1]$, however, 
the interval may be extended to $[-1,1]$ as the hypergeometric series 
in $x$ is convergent for $|x|<1$ and it has a certain asymptotic behaviour near
$-1$ as well with suitable choices of the parameters $a,b,c$;
see for instance \cite[Theorem~26]{RV60}. More precisely, when we compute 
an interpolating polynomial $p_n(x)$ of a hypergeometric function ${}_2F_1[a,b;c;x]$ on $[0,1]$ we take
the value ${}_2F_1[a,b;c;1]$ in the sense that the hypergeometric function defined at
$x=1$ by means of its asymptotic behavior at $x=1$ (see \eqref{eq2}).
Several hypergeometric functional identities also play a crucial role in determining
functional values at the interpolating points.

The following lemmas are useful in describing the error analysis for the interpolating polynomials
that we obtained in this paper. Our subsequent paper(s) in this series will cover the study of 
interpolating polynomials using other techniques. 
 
\begin{lemma}\cite[Lemma~1.33(1), p.~13 (see also Lemma~1.35(2))]{AVV97}\label{lem-AVV}
If $a,b,c\in(0,\infty)$, then $_2F_1[a,b;c;x]$ is strictly increasing on $[0,1)$. In particular,
if $c > a + b$ then for $x \in [0, 1]$ we have 
$$
{}_2F_1[a,b;c;x]\le \frac{\Gamma(c)\Gamma(c-a-b)}{\Gamma(c-a)\Gamma(c-b)}.
$$
\end{lemma}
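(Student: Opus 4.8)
The plan is to establish monotonicity by differentiating the series term by term, and then obtain the displayed bound by letting $x\to 1^-$ and invoking the asymptotic value \eqref{eq2}. First I would write
\[
{}_2F_1[a,b;c;x]=1+\sum_{n=1}^\infty \frac{(a)_n(b)_n}{(c)_n(1)_n}x^n,
\]
and observe that for $a,b,c\in(0,\infty)$ every coefficient $(a)_n(b)_n/\big((c)_n(1)_n\big)$ is a product of positive factors, hence strictly positive. Differentiating inside $|x|<1$ (justified by uniform convergence of the differentiated series on compact subsets of $(-1,1)$) gives
\[
\frac{d}{dx}\,{}_2F_1[a,b;c;x]=\sum_{n=1}^\infty n\,\frac{(a)_n(b)_n}{(c)_n(1)_n}x^{n-1},
\]
which is a sum of strictly positive terms for $x\in(0,1)$ and is nonnegative at $x=0$ (indeed equal to $ab/c>0$). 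Therefore the derivative is positive on $[0,1)$, so ${}_2F_1[a,b;c;x]$ is strictly increasing on $[0,1)$; continuity at $0$ from the series then upgrades this to strict increase on $[0,1)$ in the stated sense.

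For the second assertion, assume in addition $c>a+b$, so that ${\rm Re}\,(c-a-b)=c-a-b>0$ and \eqref{eq2} applies. Since ${}_2F_1[a,b;c;x]$ is increasing on $[0,1)$ and, by Abel's limit theorem applied to the convergent power series (convergence at $x=1$ being exactly the content of \eqref{eq2} together with positivity of the terms), we have
\[
\lim_{x\to 1^-}{}_2F_1[a,b;c;x]={}_2F_1[a,b;c;1]=\frac{\Gamma(c)\,\Gamma(c-a-b)}{\Gamma(c-a)\,\Gamma(c-b)},
\]
it follows that for every $x\in[0,1)$,
\[
{}_2F_1[a,b;c;x]\le \sup_{t\in[0,1)}{}_2F_1[a,b;c;t]=\lim_{x\to 1^-}{}_2F_1[a,b;c;x]=\frac{\Gamma(c)\,\Gamma(c-a-b)}{\Gamma(c-a)\,\Gamma(c-b)},
\]
and the value at $x=1$ is attained with equality. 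This gives the inequality on all of $[0,1]$.

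The only genuinely delicate point is the passage to the limit at $x=1$: one must know that the power series actually converges at $x=1$ (not merely that ${}_2F_1$ extends continuously there), so that Abel's theorem is applicable. Here this is immediate because under $c>a+b$ the nonnegative terms $(a)_n(b)_n/\big((c)_n(1)_n\big)$ are summable — their sum being the finite quantity in \eqref{eq2} — so monotone convergence already identifies the limit with the sum of the series at $x=1$. Alternatively, one may cite \eqref{eq2} directly as providing the finite boundary value and combine it with the monotonicity just proved; I would present the argument in this streamlined form, treating the term-by-term differentiation as the routine step and \eqref{eq2} as the supplied input.
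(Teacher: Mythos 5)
The paper offers no proof of this lemma at all: it is imported verbatim from \cite[Lemma~1.33(1)]{AVV97}, so there is no in-paper argument to compare against. Your proof is correct and is essentially the standard one — positivity of all Maclaurin coefficients for $a,b,c>0$ gives strict monotonicity on $[0,1)$, and Gauss's summation value \eqref{eq2} (valid since $c-a-b>0$), reached via monotone convergence of the nonnegative terms as $x\to 1^-$, supplies the upper bound; your closing remark correctly identifies and resolves the only delicate point, namely that convergence of the series at $x=1$ must be secured before the boundary value can be used.
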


\begin{lemma}\cite[Lemma~2.16(2), p.~36]{AVV97}\label{lem-AVV-p36}
The gamma function $\Gamma(x)$ is a log-convex function on $(0,\infty)$. In other words,
the logarithmic derivative, $\Gamma'(x)/{\Gamma(x)}$, of the gamma function is increasing 
on $(0,\infty).$
\end{lemma}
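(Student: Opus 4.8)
The plan is to prove the two assertions of the lemma in turn. First I would observe that the equivalence between ``$\Gamma$ is log-convex'' and ``$\Gamma'/\Gamma$ is increasing'' is the standard calculus fact that, for a positive $C^{2}$ function $f$, one has $(\log f)''=(f'/f)'=\bigl(f''f-(f')^{2}\bigr)/f^{2}$, so that $\log f$ is convex if and only if $f'/f$ is nondecreasing. Since $\Gamma\in C^{\infty}(0,\infty)$ and $\Gamma>0$ there, nothing delicate happens in this reduction, and the whole content of the lemma is the convexity of $\log\Gamma$ on $(0,\infty)$.

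To establish that convexity I would use the Euler integral representation $\Gamma(x)=\int_{0}^{\infty}t^{x-1}e^{-t}\,dt$, which is finite for every $x>0$. Fix $x,y>0$ and $\lambda\in(0,1)$, and write the integrand at the point $\lambda x+(1-\lambda)y$ as the product $\bigl(t^{x-1}e^{-t}\bigr)^{\lambda}\bigl(t^{y-1}e^{-t}\bigr)^{1-\lambda}$. Applying H\"older's inequality on $(0,\infty)$ with the conjugate exponents $1/\lambda$ and $1/(1-\lambda)$ gives
\[
\Gamma\bigl(\lambda x+(1-\lambda)y\bigr)\le\Bigl(\int_{0}^{\infty}t^{x-1}e^{-t}\,dt\Bigr)^{\lambda}\Bigl(\int_{0}^{\infty}t^{y-1}e^{-t}\,dt\Bigr)^{1-\lambda}=\Gamma(x)^{\lambda}\,\Gamma(y)^{1-\lambda},
\]
which, after taking logarithms, is exactly the convexity inequality $\log\Gamma(\lambda x+(1-\lambda)y)\le\lambda\log\Gamma(x)+(1-\lambda)\log\Gamma(y)$. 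Hence $\log\Gamma$ is convex, and by the reduction above $\psi:=\Gamma'/\Gamma$ is increasing on $(0,\infty)$.

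A second, more computational route I would keep in reserve is to take logarithms in the Weierstrass product $1/\Gamma(x)=xe^{\gamma x}\prod_{n\ge1}(1+x/n)e^{-x/n}$ and differentiate twice term by term (the interchange being legitimate by local uniform convergence of the resulting series), which yields the trigamma series $\psi'(x)=\sum_{n\ge0}1/(n+x)^{2}$; this is manifestly positive for $x>0$, so $\psi$ is in fact \emph{strictly} increasing. In either approach the only place needing genuine care is the analytic bookkeeping --- convergence of the Euler integral together with the validity of H\"older's inequality in the first route, or the term-by-term differentiation of an infinite product in the second. Neither is a real obstacle, and I would present the H\"older argument as the primary proof since it is the shortest and most self-contained.
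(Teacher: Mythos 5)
Your argument is correct: the H\"older-inequality computation on the Euler integral gives $\Gamma(\lambda x+(1-\lambda)y)\le\Gamma(x)^{\lambda}\Gamma(y)^{1-\lambda}$ exactly as you state, the reduction to monotonicity of $\Gamma'/\Gamma$ is the routine $C^{2}$ equivalence, and the trigamma series $\psi'(x)=\sum_{n\ge 0}(n+x)^{-2}>0$ upgrades this to strict monotonicity. Note, however, that the paper does not prove this statement at all --- it is quoted verbatim from \cite[Lemma~2.16(2)]{AVV97} as a known fact --- so there is no in-paper argument to compare against; what you have written is the standard textbook proof that the cited reference itself relies on, and either of your two routes would serve as a complete, self-contained justification.
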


Note that in all the plots in this paper, blue color graphs are meant for the original functions 
and red color graphs are for interpolating polynomials.

\section{Linear Interpolation on ${}_2F_1[a,b;c;x]$}\label{sec2}
For performing linear interpolation of the function ${}_2F_1[a,b;c;x]=f(x)$, we consider the 
end points $x_0=0$ and $x_1=1$ of the interval $[0,1]$.
The functional values at these points are respectively $f(0)=1$ and 
$f(1)$ described in \eqref{eq2}.
Hence, the equation of the segment of the straight line joining $0$ and $1$ is
$$P_l(x)= f(x_0)+\frac{x-x_0}{x_1-x_0}(f(x_1)-f(x_0))
=\frac{\Gamma(c) \Gamma(c-a-b) -\Gamma(c-a) \Gamma(c-b)}{\Gamma(c-a) \Gamma(c-b)}x + 1,
$$
when $c-a-b>0$ and $c\neq 0,-1,-2,-3,\ldots$. The polynomial $P_l(x)$ represents
the linear interpolation of ${}_2F_1[a,b;c;x]$ interpolating at $0$ and $1$.

Using Lemma~\ref{lem-AVV},
we obtain the following error estimate:

\begin{lemma}\label{2lem}
Let $a,b,c\in(-2,\infty)$ with $c-a-b>2$.
The deviation of the given function $f(x)={}_2F_1[a,b;c;x]$ from the approximating
function $P_l(x)$ for all values of $x\in[0,1]$ is estimated by
$$|E_l(f,x)|=|f(x)-P_l(x)|\le \frac{|a(a+1)b(b+1)|}{8}
\frac{\Gamma(c)\Gamma(c-a-b-2)}{\Gamma(c-a)\Gamma(c-b)}.
$$
\end{lemma}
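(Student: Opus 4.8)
The plan is to invoke the standard error formula for linear (Lagrange) interpolation. Since $P_l$ interpolates $f(x) = {}_2F_1[a,b;c;x]$ at the two nodes $x_0 = 0$ and $x_1 = 1$, the interpolation error at any $x \in [0,1]$ is
\[
E_l(f,x) = f(x) - P_l(x) = \frac{f''(\xi)}{2!}\,(x - 0)(x - 1)
\]
for some $\xi = \xi(x) \in (0,1)$. Taking absolute values, one has $|E_l(f,x)| \le \tfrac12\,|x(x-1)|\cdot\sup_{t\in(0,1)}|f''(t)|$, and since $\max_{x\in[0,1]}|x(1-x)| = \tfrac14$, attained at $x = \tfrac12$, this gives $|E_l(f,x)| \le \tfrac18\sup_{t\in(0,1)}|f''(t)|$. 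The factor $1/8$ in the claimed bound is exactly this geometric constant, so what remains is to estimate $|f''(t)|$ on $(0,1)$.

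Next I would compute $f''$ by differentiating the hypergeometric series termwise, or more slickly, use the well-known derivative formula
\[
\frac{d^2}{dx^2}\,{}_2F_1[a,b;c;x] = \frac{(a)_2\,(b)_2}{(c)_2}\,{}_2F_1[a+2,b+2;c+2;x]
= \frac{a(a+1)b(b+1)}{c(c+1)}\,{}_2F_1[a+2,b+2;c+2;x].
\]
Thus $|f''(t)| \le \dfrac{|a(a+1)b(b+1)|}{|c(c+1)|}\,\bigl|{}_2F_1[a+2,b+2;c+2;t]\bigr|$ for $t \in (0,1)$. To bound the shifted hypergeometric function I would apply Lemma~\ref{lem-AVV} with parameters $(a+2, b+2, c+2)$: this requires $a+2, b+2, c+2 \in (0,\infty)$ — which is exactly the hypothesis $a,b,c \in (-2,\infty)$ — and it requires $c+2 > (a+2)+(b+2)$, i.e.\ $c - a - b > 2$, which is the other hypothesis. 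Under these conditions Lemma~\ref{lem-AVV} yields, for all $t\in[0,1]$,
\[
{}_2F_1[a+2,b+2;c+2;t] \le \frac{\Gamma(c+2)\,\Gamma(c-a-b-2)}{\Gamma(c-a)\,\Gamma(c-b)}.
\]

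Finally I would assemble the pieces: combining the three estimates gives
\[
|E_l(f,x)| \le \frac{1}{8}\cdot\frac{|a(a+1)b(b+1)|}{|c(c+1)|}\cdot\frac{\Gamma(c+2)\,\Gamma(c-a-b-2)}{\Gamma(c-a)\,\Gamma(c-b)},
\]
and the elementary identity $\Gamma(c+2) = (c+1)c\,\Gamma(c)$ collapses $\dfrac{\Gamma(c+2)}{c(c+1)}$ to $\Gamma(c)$ (noting that the hypotheses force $c > a+b+2 > 0$, so $c(c+1) > 0$ and the absolute value on $c(c+1)$ is harmless), producing exactly the stated bound. The only genuinely delicate point is bookkeeping on the parameter ranges — verifying that the shifted triple $(a+2,b+2,c+2)$ lands in the regime where Lemma~\ref{lem-AVV} applies, and that the various gamma-function arguments appearing ($c-a$, $c-b$, $c-a-b-2$, $c$) are all positive so that the quantities are well defined and the inequalities point the right way; everything else is a routine application of the classical interpolation remainder and the hypergeometric derivative formula.
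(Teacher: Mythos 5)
Your proposal is correct and follows exactly the route the paper intends: the paper's own proof is a terse sketch citing precisely the same four ingredients (the $1/8$ constant from the linear interpolation remainder, the derivative formula \eqref{der-for} applied twice, Lemma~\ref{lem-AVV} for the shifted triple $(a+2,b+2,c+2)$, and $\Gamma(x+1)=x\Gamma(x)$ to absorb the $c(c+1)$ denominator). Your write-up simply fills in the details, including the correct observation that the hypotheses $a,b,c\in(-2,\infty)$ and $c-a-b>2$ are exactly what make Lemma~\ref{lem-AVV} applicable to the shifted parameters.
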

\begin{proof}
It requires to maximize 
$$|E_l(f,x)|=\frac{x(1-x)}{2}|f''(x)|
$$
in $[0,1]$, equivalently, to find
$$\frac{(1-0)^2}{8}\max_{0\le x\le 1}|f''(x)|,
$$
where $f(x)={}_2F_1[a,b;c;x]$. The following well-known derivative formula is useful:
\begin{equation}\label{der-for}
\frac{d}{dx}\,{}_2F_1[a,b;c;x]=\frac{ab}{c}\,{}_2F_1[a+1,b+1;c+1;x].
\end{equation}
The proof follows from (\ref{eq2}), Lemma~\ref{lem-AVV}, (\ref{der-for}), and the fact that
$\Gamma(x+1)=x\Gamma(x)$.
\end{proof}

\begin{remark}
It follows from Lemma~\ref{2lem} that there is no error for either of the choices 
$a=0$, $a=-1$, $b=0$, $b=-1$. In other words, for either of these choices
$E_l(f,x)$ vanishes.
\end{remark}

Figure~\ref{Pl} shows linear interpolation of the hypergeometric function at $0$ and $1$, whereas
Table~\ref{Tl} compares the values of the hypergeometric function
up to four decimal places with its interpolating polynomial values in the interval $[0,1]$ 
for the choice of parameters $a=1$, $b=2$ and $c=6$. 
Figure~\ref{Pl} and Table~\ref{Tl} also indicate errors at various points 
within the unit interval except at the end points. 

\begin{figure}[H] 
\includegraphics[width=8cm]{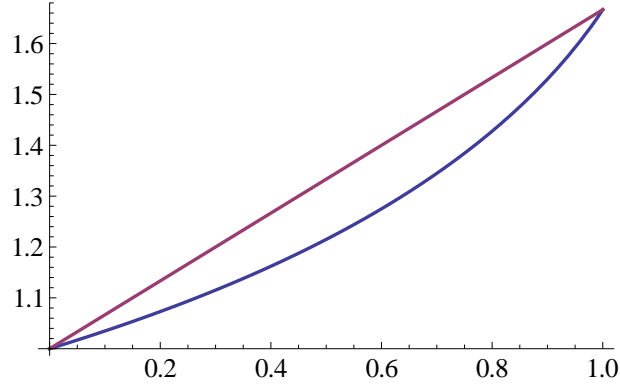}
\caption{Linear interpolation of ${}_2F_1[1,2;6;x]$ at $0$ and $1$.}\label{Pl}
\end{figure}

\begin{table}[H]
\begin{tabular}{|c|c|c|c|c|c|}
\hline 
Nodes $x_i$ & $0$ & $0.25$ & $0.5$ & $0.75$ & $1$ \\
\hline 
Actual values $_2F_1[1,2;6;x_i]$ & $1$ & $1.0936$ & $1.2149$ & $1.3843$ & $1.6667$ \\
\hline 
Polynomial approximations  & $1$ & $1.1667$ & $1.3333$ & $1.5000$ & $1.6667$\\ 
by $P_l(x_i)$ &&&&&\\
\hline
Validity of error bounds & $0$ & $0.0731<1.25$ & $0.1184<1.25$ & $0.1157<1.25$ & $0$\\ 
 by $E_l(f,x_i)$ &&&&&\\
\hline
\end{tabular}
\caption{Comparison of the functional and linear polynomial values}
\end{table}\label{Tl}

\section{Quadratic Interpolation on ${}_2F_1[a,b;c;x]$}
Let the three points in consideration for quadratic interpolation be $x_0=0$, $x_1=0.5$ and $x_2=1$.
The functional values at $x_0=0$ and $x_2=1$ can be found easily in terms of the
parameters but the functional value at $x_1=0.5$ can be obtained through 
different identities involving hypergeometric functions $_2F_1[a,b;c;x]$
dealing with various constraints on the parameters $a,b,c$. 
This section consists of two subsections and in each subsection 
the method to obtain the functional value of $_2F_1[a,b;c;x]$ at $x=0.5$ uses three different identities. 
Finally, we compare the resultant interpolations. In fact we observe that the interpolating polynomial
remains unchanged in two cases although the
approaches are different (see Section~\ref{sec3.2} for more details).

\subsection{Quadratic Interpolation on ${}_2F_1[a,1-a;c;x]$}\label{sec3.1}
This section deals with the value ${}_2F_1[a,b;c;1/2]$ where $a+b=1$ due to the following identity
of Bailey (see \cite[p.~11]{Bai35} and also \cite[p.~69]{RV60}):
\begin{equation}\label{3.1eq1}
{}_2F_1[a,1-a;c;{1}/{2}]=\frac{2^{1-c}\,\Gamma(c)\Gamma(\frac{1}{2})}{\Gamma(\frac{1}{2}(c+a))\,\Gamma(\frac{1}{2}(1+c-a))}
=\frac{\Gamma(\frac{1}{2}c)\,\Gamma(\frac{1}{2}(1+c))}{\Gamma(\frac{1}{2}(c+a))\,\Gamma(\frac{1}{2}(1+c-a))},
\end{equation}
where $c$ is neither zero nor negative integers. 
It follows from \eqref{3.1eq1} that
\begin{equation}\label{3.1eq2}
\Gamma\Big(\frac{1}{2}c\Big)\,\Gamma\Big(\frac{1}{2}(1+c)\Big)=2^{1-c}\,\sqrt{\pi}\,\Gamma(c),
\end{equation}
since $\Gamma(1/2)=\sqrt{\pi}$.
In this case, we obtain
$$
f(x_0)=f(0)={}_2F_1[a,1-a;c;0]=1;
$$
$$
f(x_1)=f(0.5)={}_2F_1[a,1-a;c;1/2]=\frac{\Gamma(\frac{1}{2}c)\,\Gamma(\frac{1}{2}(1+c))}{\Gamma(\frac{1}{2}(c+a))\,\Gamma(\frac{1}{2}(1+c-a))};
$$ 
and 
$$
f(x_2)=f(1)={}_2F_1[a,1-a;c;1]=\frac{\Gamma(c)\Gamma(c-1)}{\Gamma(c-a)\Gamma(c+a-1)}\quad (c>1).
$$
Consider the well-known Lagrange fundamental polynomials 
$$L_0(x)=\frac{(x-x_1)(x-x_2)}{(x_0-x_1)(x_0-x_2)},~~
L_1(x)=\frac{(x-x_0)(x-x_2)}{(x_1-x_0)(x_1-x_2)},~~
L_2(x)=\frac{(x-x_0)(x-x_1)}{(x_2-x_0)(x_2-x_1)}.
$$
Thus, the quadratic interpolation of $f(x)={}_2F_1[a,1-a;c;x]$ becomes 
\begin{align*} 
P_{q_3}(x) 
& = f(x_0)L_0(x)+f(x_1)L_1(x)+f(x_2)L_2(x)\\
& = (2x^2-3x+1)+(-4x^2+4x)\frac{\Gamma(\frac{1}{2}c)\,\Gamma(\frac{1}{2}(1+c))}{\Gamma(\frac{1}{2}(c+a))\,\Gamma(\frac{1}{2}(1+c-a))}\\
& \hspace*{8cm} +(2x^2-x)\frac{\Gamma(c)\Gamma(c-1)}{\Gamma(c-a)\Gamma(c+a-1)}.
\end{align*}
This leads to the following result.

\begin{theorem}\label{3.1thm1}
Let $a,b,c\in\mathbb{R}$ be such that $c>1$.
Then 
\begin{align*}
P_{q_1}(x) &= \left(2-\frac{4\,\Gamma(\frac{1}{2}c)\,\Gamma(\frac{1}{2}(1+c))}{\Gamma(\frac{1}{2}(c+a))\,\Gamma(\frac{1}{2}(1+c-a))}
+\frac{2\,\Gamma(c)\Gamma(c-1)}{\Gamma(c-a)\Gamma(c+a-1)}\right)x^2\\
& \hspace*{2cm}+\left(\frac{4\,\Gamma(\frac{1}{2}c)\,\Gamma(\frac{1}{2}(1+c))}{\Gamma(\frac{1}{2}(c+a))\,\Gamma(\frac{1}{2}(1+c-a))}
-\frac{\Gamma(c)\Gamma(c-1)}{\Gamma(c-a)\Gamma(c+a-1)}-3\right)x+1.
\end{align*} 
is a quadratic interpolation of ${}_2F_1[a,1-a;c;x]$ in $[0,1]$.
\end{theorem}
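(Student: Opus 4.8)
The plan is to prove this by direct verification, since the assertion is really that the Lagrange interpolant $P_{q_3}(x)$ constructed just above the theorem from the nodes $x_0=0$, $x_1=1/2$, $x_2=1$ coincides, after collecting powers of $x$, with the polynomial $P_{q_1}(x)$ displayed in the statement. Thus the argument splits into two tasks: (i) identify the three values $f(x_0)$, $f(x_1)$, $f(x_2)$ of $f(x)={}_2F_1[a,1-a;c;x]$; and (ii) perform an algebraic rearrangement of $f(x_0)L_0(x)+f(x_1)L_1(x)+f(x_2)L_2(x)$.

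For task (i): $f(0)=1$ is immediate from the defining series. For $f(1/2)$ I would apply Bailey's identity \eqref{3.1eq1} with $b=1-a$, which needs only that $c$ is not a non-positive integer, guaranteed by $c>1$, and yields $f(1/2)=\Gamma(\frac{1}{2}c)\,\Gamma(\frac{1}{2}(1+c))/\big(\Gamma(\frac{1}{2}(c+a))\,\Gamma(\frac{1}{2}(1+c-a))\big)$. For $f(1)$ I would use the Gauss endpoint value \eqref{eq2} (equivalently the first part of Lemma~\ref{lem-AVV}): here $c-a-b=c-a-(1-a)=c-1$, so the hypothesis $c>1$ is exactly the condition $\mathrm{Re}(c-a-b)>0$ that makes the value finite, and it simplifies to $f(1)=\Gamma(c)\Gamma(c-1)/\big(\Gamma(c-a)\Gamma(c+a-1)\big)$ using $\Gamma(c-b)=\Gamma(c-1+a)=\Gamma(c+a-1)$.

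For task (ii): substituting $x_0=0$, $x_1=1/2$, $x_2=1$ into the Lagrange fundamental polynomials gives $L_0(x)=2x^2-3x+1$, $L_1(x)=-4x^2+4x$, $L_2(x)=2x^2-x$. Writing $A=f(1/2)$ and $B=f(1)$, one gets $P_{q_3}(x)=L_0(x)+A\,L_1(x)+B\,L_2(x)=(2-4A+2B)x^2+(4A-B-3)x+1$; reading off the coefficients of $x^2$ and of $x$ reproduces $P_{q_1}(x)$ verbatim, so in fact $P_{q_1}=P_{q_3}$ and the change of subscript is only a regrouping. Since $P_{q_1}$ is then a polynomial of degree at most two that agrees with $f$ at $0$, $1/2$ and $1$ by construction, it is by definition a quadratic interpolation of ${}_2F_1[a,1-a;c;x]$ on $[0,1]$.

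I do not expect any real obstacle here; the content is elementary once the three nodal values are in hand. The only point requiring attention is the legitimacy of evaluating $f$ at the right endpoint $x=1$, which is handled by \eqref{eq2} and is precisely why the single hypothesis $c>1$ suffices; in particular no sign restriction on $a$ (or on $b=1-a$) is needed, because both Bailey's formula \eqref{3.1eq1} and the Gauss value \eqref{eq2} are available for all real $a$ with $c$ away from $\{0,-1,-2,\ldots\}$. As a sanity check on the coefficient computation one could instead write $P_{q_3}$ in Newton's divided-difference form and confirm that it collapses to the same expression.
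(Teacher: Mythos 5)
Your proposal is correct and follows exactly the paper's own route: the authors obtain the three nodal values $f(0)=1$, $f(1/2)$ via Bailey's identity \eqref{3.1eq1}, and $f(1)$ via \eqref{eq2} with $c-a-b=c-1>0$, then form the Lagrange combination $f(x_0)L_0(x)+f(x_1)L_1(x)+f(x_2)L_2(x)$ and collect powers of $x$, which is precisely your computation. Nothing further is needed.
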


\begin{remark}
It is evident that when $a=0,1$, then $P_{q_1}(x)={}_2F_1[a,1-a;c;x]=1$ for all $x\in [0,1]$ 
and for all $c>1$. Moreover, for all $c>1$, we have the following three natural observations 
\begin{enumerate}
\item[(i)] if $-1<a<0$, then $P_{q_1}(x)$ and ${}_2F_1[a,1-a;c;x]$ both decrease together in $[0,1]$;
\item[(ii)] if $0<a<1$, then $P_{q_1}(x)$ and ${}_2F_1[a,1-a;c;x]$ both increase together in $[0,1]$; and 
\item[(iii)] if $1<a<2$, then $P_{q_1}(x)$ and ${}_2F_1[a,1-a;c;x]$ both decrease together in $[0,1]$.
\end{enumerate}
Indeed, all of them follow from derivative test. More observations are stated later while 
estimating the error (see Remark~\ref{3.2rem1}).
\end{remark}

An interpolating polynomial $P_{q_1}(x)$ of ${}_2F_1[a,1-a;c;x]$ for certain choices of
parameters $a$ and $c$ is as shown in Figure~\ref{Pq1}.

\begin{figure}[H] 
\centering
\includegraphics[width=8cm]{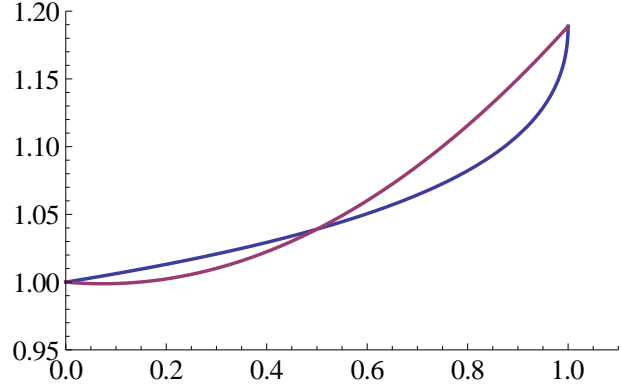}
\caption{The quadratic interpolation of ${}_2F_1[0.9,0.1;1.5;x]$ at $0,0.5$, and $1$.} \label{Pq1}
\end{figure}

\begin{remark}
Note that in Theorem~\ref{3.1thm1}, the parameter $c$ can not be chosen such that 
$c\le (a+b+1)/2$ since the choice $b=1-a$ resulting to $c\le 1$, which is a contradiction to the 
assumption that $c>1$. In particular, $c\neq (a+b+1)/2$ in Theorem~\ref{3.1thm1},
the negation of a constraint that will be considered in the next subsection.
\end{remark}

\subsection{Quadratic Interpolation on ${}_2F_1[a,b;(a+b+1)/2;x]$}\label{sec3.2}
In this section, $f(x)={}_2F_1[a,b;c;x]$, $c=(a+b+1)/2$, is first interpolated using 
the following quadratic transformation obtained from \cite[(3.1.3)]{AAR99} 
(see also \cite[Theorem~2.5]{RV60}).
\begin{lemma}\label{qt1}
If $(a+b+1)/2$ is neither zero nor a negative integer, and if $|x|<1$ and
$|4x(1-x)|<1$, then
\begin{equation}\label{eq4}
_2F_1\left[a,b;\frac{a+b+1}{2};x\right]
= {_2F}_1\left[\frac{a}{2},\frac{b}{2};\frac{a+b+1}{2};4x(1-x)\right].
\end{equation}
\end{lemma}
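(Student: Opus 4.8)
The plan is to show that both sides of \eqref{eq4}, regarded as functions of $x$, are analytic at $x=0$, take the value $1$ there, and satisfy one and the same linear second-order ODE; uniqueness of the analytic solution of that ODE near $x=0$ then forces equality, and the identity theorem spreads it over the region where both sides are defined. Write $w(x)={}_2F_1[a,b;(a+b+1)/2;x]$ and recall from the hypergeometric differential equation displayed in the Introduction that, with $c=(a+b+1)/2$, $w$ satisfies
\begin{equation}\label{eq4proofa}
x(1-x)\,w''+\frac{a+b+1}{2}(1-2x)\,w'-ab\,w=0 .
\end{equation}
Let $g(t)={}_2F_1[a/2,b/2;(a+b+1)/2;t]$, which likewise solves
\begin{equation}\label{eq4proofb}
t(1-t)\,g''+\Big(\frac{a+b+1}{2}-\frac{a+b+2}{2}\,t\Big)g'-\frac{ab}{4}\,g=0 .
\end{equation}

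Next I would perform the substitution $t=4x(1-x)$ and set $h(x)=g\big(4x(1-x)\big)$. The algebra to keep in hand is $1-t=(1-2x)^2$, $x(1-x)=t/4$, and $t'(x)=4(1-2x)$, whence $h'=4(1-2x)\,g'(t)$ and $h''=16(1-2x)^2\,g''(t)-8\,g'(t)$. Substituting these into the left-hand side of \eqref{eq4proofa} and replacing $(1-2x)^2$ by $1-t$ and $x(1-x)$ by $t/4$, a short computation collapses the expression to
\begin{equation}\label{eq4proofc}
4t(1-t)\,g''+\big(2(a+b+1)-2(a+b+2)\,t\big)g'-ab\,g ,
\end{equation}
which is precisely $4$ times the left-hand side of \eqref{eq4proofb}. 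Hence $h$ is also a solution of \eqref{eq4proofa}.

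Finally, $x=0$ is a regular singular point of \eqref{eq4proofa} with indicial exponents $0$ and $1-c=(1-a-b)/2$; since $c$ is assumed to be neither zero nor a negative integer, the solution that is analytic at $0$ and equals $1$ there is unique. Both $w$ and $h=g(4x(1-x))$ are analytic at $x=0$ — the inner map $4x(1-x)$ is a polynomial vanishing at the origin and $g$ is analytic at $0$ — and both equal $1$ at $x=0$. Therefore $w\equiv h$ near $0$, and the two sides agree throughout the common region $|x|<1$, $|4x(1-x)|<1$. The step I expect to be the main obstacle is exactly this last one: because $x\mapsto 4x(1-x)$ is two-to-one, one must be careful to single out the branch/solution regular at the origin rather than the companion solution carrying the exponent $(1-a-b)/2$, and one must separately dispatch the degenerate parameter values where the two indicial exponents differ by an integer — handled by matching the $x^0$ Taylor coefficient directly or by a continuity-in-parameters argument. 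An alternative, more computational route is to expand the right-hand side as $\sum_{n\ge 0}\frac{(a/2)_n(b/2)_n}{(c)_n\,n!}\,4^n x^n(1-x)^n$, expand each $(1-x)^n$ by the binomial theorem, collect the coefficient of $x^m$, and recognise the resulting terminating sum as a ${}_2F_1$ (or ${}_3F_2$) evaluable in closed form to $\frac{(a)_m(b)_m}{(c)_m\,m!}$; this avoids the branch subtlety at the cost of invoking a hypergeometric summation identity.
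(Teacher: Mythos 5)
The paper offers no proof of Lemma \ref{qt1} at all: it is imported verbatim from \cite[(3.1.3)]{AAR99} (see also \cite[Theorem~2.5]{RV60}), so you are supplying an argument where the authors merely cite. Your ODE route is correct and is in fact one of the standard ways to prove this quadratic transformation. I checked the computation: with $t=4x(1-x)$ one has $x(1-x)=t/4$, $(1-2x)^2=1-t$, $h'=4(1-2x)g'$, $h''=16(1-t)g''-8g'$, and substitution into $x(1-x)h''+\tfrac{a+b+1}{2}(1-2x)h'-ab\,h$ yields $4t(1-t)g''+\bigl(2(a+b+1)-2(a+b+2)t\bigr)g'-ab\,g$, which is four times the operator annihilating ${}_2F_1[a/2,b/2;(a+b+1)/2;t]$, exactly as you say. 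Two remarks. First, the degenerate cases you worry about at the end do not actually occur under the stated hypothesis: if $w=\sum c_nx^n$ is analytic and satisfies the hypergeometric equation with parameter $c$, the equation forces $(n+1)(n+c)c_{n+1}=(n+a)(n+b)c_n$, so all coefficients are determined by $c_0$ as soon as $c\notin\{0,-1,-2,\dots\}$ --- which is precisely the lemma's assumption on $(a+b+1)/2$; the second indicial exponent $1-c$ is a positive integer only when $c$ is a nonpositive integer, so no companion analytic solution and no logarithmic case intrudes. This recursion argument replaces both the Frobenius discussion and the continuity-in-parameters patch. Second, a caveat about the final extension step: the set $\{|x|<1\}\cap\{|4x(1-x)|<1\}$ is \emph{disconnected} (the curve $|x(1-x)|=1/4$ is a lemniscate of Bernoulli with node at $x=1/2$, and its interior splits into a lobe around $0$ and a lobe around $1$), so the identity theorem propagates the equality only over the component containing the origin. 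That is in fact the correct domain of validity --- near $x=1$ the right-hand side tends to $1$ while the left-hand side does not, since the right-hand side is invariant under $x\mapsto 1-x$ --- so this is a defect of the lemma as transcribed from the literature rather than of your method, but your closing sentence should restrict to that component rather than claim the whole region.
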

If we choose $x=0.5$ then the right hand side of (\ref{eq4}) computes
asymptotic behavior of the hypergeometric function at $1$. Hence the functional value at 
$x=0.5$ of the function $f(x)={}_2F_1[a,b;(a+b+1)/2;x]$ can be obtained with the help of 
\eqref{eq2}. Due to Lemma~\ref{qt1} and \eqref{eq2}, in this case, the constraints on the 
parameters are computed as:
\begin{itemize}
\item $a+b<1$;
\item $a+b\neq -(2n+1)$ for $n \in \mathbb{N}\cup\{0\}$.
\end{itemize}
One can easily obtain that
\begin{align*}
f(x_0) &={}_2F_1\left[a,b;\frac{a+b+1}{2};0\right]=1;\\
f(x_1) &= {}_2F_1\left[a,b;\frac{a+b+1}{2};\frac{1}{2}\right]=\frac{\sqrt{\pi}\,\Gamma\Big(\displaystyle\frac{a+b+1}{2}\Big)}{\Gamma\Big(\displaystyle\frac{a+1}{2}\Big)\Gamma\Big(\displaystyle\frac{b+1}{2}\Big)}
;\\
f(x_2) &= {}_2F_1\left[a,b;\frac{a+b+1}{2};1\right]=\frac{\Gamma\Big(\displaystyle\frac{1-a-b}{2}\Big)\Gamma\Big(\displaystyle\frac{a+b+1}{2}\Big)}{\Gamma\Big(\displaystyle\frac{a+1-b}{2}\Big)\Gamma\Big(\displaystyle\frac{b+1-a}{2}\Big)}=\frac{\cos\pi\displaystyle\frac{(b-a)}{2}}{\cos\pi\displaystyle\frac{(b+a)}{2}},
\end{align*}
where 
$f(x_2)$ is obtained by the well-known Euler's reflection formula 
(in non-integral variable $x$)
$$\Gamma(x)\Gamma(1-x)=\frac{\pi}{\sin(\pi x)}.
$$
This leads to the 
additional constraints on the parameters as (these constraints
may be relaxed when one does not use Euler's reflection formula!)

\begin{equation}\label{3.2eq2}
\left\{
\begin{array}{ll}
& a+b\neq 1\pm 2n \quad \mbox{and} \quad a-b\neq -1\pm 2n, ~n\in \mathbb{Z}; \mbox{ or}\\
& a+b\neq -1\pm 2n \quad \mbox{and} \quad a-b\neq 1\pm 2n, ~n\in \mathbb{Z}.
\end{array}
\right.
\end{equation}

Thus, the first quadratic interpolation of $f(x)={}_2F_1[a,b;(a+b+1)/2;x]$ becomes 
\begin{align*} 
P_{q_2}(x) 
& = f(x_0)L_0(x)+f(x_1)L_1(x)+f(x_2)L_2(x)\\
& = (2x^2-3x+1)+(-4x^2+4x)\frac{\sqrt{\pi}\,\Gamma\left(\displaystyle\frac{a+b+1}{2}\right)}
{\Gamma\left(\displaystyle\frac{a+1}{2}\right)\Gamma\left(\displaystyle\frac{b+1}{2}\right)}
+(2x^2-x)\frac{\cos\pi\displaystyle\frac{(b-a)}{2}}{\cos\pi\displaystyle\frac{(b+a)}{2}}.
\end{align*}
This leads to the following result.

\begin{theorem}\label{3.2thm1}
Let $a,b\in\mathbb{R}$ and $n \in \mathbb{N}\cup\{0\}$ be such that $a+b\neq -(2n+1)$ and $a+b<1$.
If either $a+b\neq 1\pm 2n$ and $a-b\neq -1\pm 2n$, or
$a+b\neq -1\pm 2n$ and $a-b\neq 1\pm 2n$ hold,
then 
\begin{align*}
P_{q_2}(x) &= \left(2-\frac{4\sqrt{\pi}\,\Gamma\left(\displaystyle\frac{a+b+1}{2}\right)}
{\Gamma\left(\displaystyle\frac{a+1}{2}\right)\Gamma\left(\displaystyle\frac{b+1}{2}\right)}
+\frac{2\cos\pi\displaystyle\frac{(b-a)}{2}}{\cos\pi\displaystyle\frac{(b+a)}{2}}\right)x^2\\
& \hspace*{3cm}+\left(\frac{4\sqrt{\pi}\,\Gamma\left(\displaystyle\frac{a+b+1}{2}\right)}
{\Gamma\left(\displaystyle\frac{a+1}{2}\right)\Gamma\left(\displaystyle\frac{b+1}{2}\right)}
-\frac{\cos\pi\displaystyle\frac{(b-a)}{2}}{\cos\pi\displaystyle\frac{(b+a)}{2}}-3\right)x+1.
\end{align*} 
is a quadratic interpolation of ${}_2F_1[a,b;(a+b+1)/2;x]$ in $[0,1]$.
\end{theorem}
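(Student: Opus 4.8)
The plan is to run the same Lagrange interpolation scheme already used for Theorem~\ref{3.1thm1}, now with the nodes $x_0=0$, $x_1=1/2$, $x_2=1$, for which the fundamental polynomials specialize to $L_0(x)=2x^2-3x+1$, $L_1(x)=-4x^2+4x$ and $L_2(x)=2x^2-x$. Writing $f(x)={}_2F_1[a,b;(a+b+1)/2;x]$, I would assemble $P_{q_2}(x)=f(x_0)L_0(x)+f(x_1)L_1(x)+f(x_2)L_2(x)$, so that the whole content of the statement reduces to (i) the three nodal evaluations, which are exactly the ones displayed in the discussion preceding the theorem, and (ii) a check that the stated hypotheses are precisely those under which those three values are finite and well defined; collecting the coefficients of $x^2$, $x$ and $1$ is then routine, with the constant term equal to $1$ since only $L_0$ contributes there.

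For the evaluations themselves: $f(x_0)=f(0)=1$ is immediate from the series. For $f(x_1)=f(1/2)$ I would invoke the quadratic transformation \eqref{eq4} of Lemma~\ref{qt1} at $x=1/2$, where $4x(1-x)=1$, to get ${}_2F_1[a,b;\tfrac{a+b+1}{2};\tfrac12]={}_2F_1[\tfrac a2,\tfrac b2;\tfrac{a+b+1}{2};1]$, and then read the right-hand side through \eqref{eq2}; this is legitimate since $\operatorname{Re}\!\bigl(\tfrac{a+b+1}{2}-\tfrac a2-\tfrac b2\bigr)=\tfrac12>0$ unconditionally, while the hypothesis $a+b\neq-(2n+1)$ keeps $\tfrac{a+b+1}{2}$ off the nonpositive integers. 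With $\Gamma(1/2)=\sqrt\pi$ this gives the stated value of $f(x_1)$; note that, unlike the route in Section~\ref{sec3.1}, this one does not need the duplication formula \eqref{3.1eq2}. For $f(x_2)=f(1)$ I would apply \eqref{eq2} directly with $c=(a+b+1)/2$, so that $c-a-b=\tfrac{1-a-b}{2}$, $c-a=\tfrac{b+1-a}{2}$, $c-b=\tfrac{a+1-b}{2}$ and the hypothesis $a+b<1$ is exactly $\operatorname{Re}(c-a-b)>0$; then, pairing $\Gamma\!\bigl(\tfrac{1-a-b}{2}\bigr)$ with $\Gamma\!\bigl(\tfrac{1+a+b}{2}\bigr)$ and $\Gamma\!\bigl(\tfrac{b+1-a}{2}\bigr)$ with $\Gamma\!\bigl(\tfrac{a+1-b}{2}\bigr)$ and using Euler's reflection formula together with $\sin\!\bigl(\tfrac{\pi}{2}\pm\theta\bigr)=\cos\theta$, the resulting four-gamma ratio collapses to $\cos\!\bigl(\tfrac{\pi(b-a)}{2}\bigr)/\cos\!\bigl(\tfrac{\pi(a+b)}{2}\bigr)$. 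The non-integrality dichotomy \eqref{3.2eq2} is exactly what legitimizes the two reflection pairings, and in particular keeps the denominator $\cos\!\bigl(\tfrac{\pi(a+b)}{2}\bigr)$ away from $0$.

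The step I would be most careful to phrase correctly — and the only one that is more than bookkeeping — is the boundary evaluation used above: strictly speaking Lemma~\ref{qt1} requires $|4x(1-x)|<1$, which degenerates to equality at $x=1/2$, so the transformation is being applied in the limiting sense sanctioned by the convention fixed in the Introduction, namely that ${}_2F_1$ at argument $1$ denotes the value \eqref{eq2}. Granting that, substituting the three nodal values into $P_{q_2}(x)=L_0(x)+f(x_1)L_1(x)+f(x_2)L_2(x)$ and collecting terms produces the displayed polynomial. Everything else is simply tracking which of the conditions on $(a,b)$ keep the various gamma arguments off the nonpositive integers and the cosine denominator off zero.
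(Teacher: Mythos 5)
Your proposal is correct and follows essentially the same route as the paper: Lagrange interpolation at $0,1/2,1$ with $f(1/2)$ obtained from the quadratic transformation \eqref{eq4} at $x=1/2$ read through \eqref{eq2}, and $f(1)$ from \eqref{eq2} plus Euler's reflection formula. Your explicit remark that \eqref{eq4} is being used at the boundary case $|4x(1-x)|=1$ via the convention fixed in the Introduction is a point the paper passes over silently, and is a welcome clarification rather than a deviation.
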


Secondly, we also discuss quadratic interpolation of the same function
${}_2F_1[a,b;c;x]$, $c=(a+b+1)/2$, in $[0,1]$, but using a different hypergeometric
identity. Finally, we observe that both the interpolations are same
except at a minor difference in one of the constraints.

Recall the transformation formula (see \cite[Theorem~20, p.~60]{RV60}):
\begin{lemma}\label{3.3lem1}
If $|x|<1$ and $|x/(1-x)|<1$, then we have
$$
{}_2F_1[a,b;c;x]={(1-x)^{-a}} {_2F_1[a,c-b;c;\frac{-x}{1-x}]}. 
$$
\end{lemma}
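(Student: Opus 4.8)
The plan is to recognize the stated identity as the classical \emph{Pfaff transformation} and to derive it from Euler's integral representation, which is already cited in the paper. For $\mathrm{Re}\,c>\mathrm{Re}\,b>0$ one has
$$
{}_2F_1[a,b;c;x]=\frac{\Gamma(c)}{\Gamma(b)\,\Gamma(c-b)}\int_0^1 t^{b-1}(1-t)^{c-b-1}(1-xt)^{-a}\,dt .
$$
First I would substitute $t\mapsto 1-t$, which replaces $(1-xt)^{-a}$ by $(1-x+xt)^{-a}$. Since $|x|<1$, I may write $1-x+xt=(1-x)\left(1-\frac{-x}{1-x}t\right)$ and pull the factor $(1-x)^{-a}$ outside the integral, leaving
$$
{}_2F_1[a,b;c;x]=(1-x)^{-a}\,\frac{\Gamma(c)}{\Gamma(b)\,\Gamma(c-b)}\int_0^1 t^{c-b-1}(1-t)^{b-1}\left(1-\frac{-x}{1-x}\,t\right)^{-a}\,dt .
$$
Recognizing the remaining integral as Euler's representation of ${}_2F_1[a,c-b;c;-x/(1-x)]$ --- with the roles of $b$ and $c-b$ interchanged --- completes the identity, and the hypothesis $|x/(1-x)|<1$ is exactly what is needed for that integral to converge to the hypergeometric function.

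To pass from the restriction $\mathrm{Re}\,c>\mathrm{Re}\,b>0$ to arbitrary admissible parameters, I would invoke analytic continuation: for $x$ fixed in the stated region, both sides are analytic in $(a,b,c)$ with $c$ avoiding the nonpositive integers, so agreement on a nonempty open subset of parameter space forces agreement everywhere. Alternatively, one can give a self-contained power-series proof: expanding $(1-x)^{-a-n}=\sum_{k\ge 0}\frac{(a+n)_k}{k!}x^k$ on the right-hand side, using the telescoping relation $(a)_n(a+n)_{m-n}=(a)_m$, and collecting the coefficient of $x^m$, the right-hand side produces $(a)_m$ times the finite sum $\frac{1}{m!}\,{}_2F_1[-m,c-b;c;1]$; the Chu--Vandermonde summation evaluates this to $(b)_m/(m!\,(c)_m)$, which is precisely the coefficient of $x^m$ in ${}_2F_1[a,b;c;x]$. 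The region $|x|<1$, $|x/(1-x)|<1$ ensures absolute convergence of the double series and hence legitimizes the rearrangement.

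The main obstacle is not the algebra but the bookkeeping. In the integral route one must justify the parameter continuation (or, avoiding it, carefully track the branch of $(1-x)^{-a}$ and the convergence of the integral near $t=1$ when $\mathrm{Re}(c-b)$ is small), while in the series route the only genuinely delicate step is the interchange of the two summations --- this is the point where the hypothesis $|x/(1-x)|<1$ is actually used, and once the single sum is in Chu--Vandermonde form everything else is routine. I would present the integral argument as the primary proof, since it makes the structural origin of the factor $(1-x)^{-a}$ transparent and reuses a tool already introduced in the paper.
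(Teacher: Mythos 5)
Your proposal is correct; it is the standard proof of the Pfaff transformation. Note, however, that the paper itself does not prove this lemma at all --- it simply cites it (Rainville, \emph{Special Functions}, Theorem~20, p.~60) --- so you have supplied an argument where the authors supply a reference. Both of your routes are sound. The integral route is exactly the classical derivation: the substitution $t\mapsto 1-t$ in Euler's integral, the factorization $1-x+xt=(1-x)\bigl(1-\frac{-x}{1-x}t\bigr)$, the observation that the Beta normalizing constant is symmetric under $b\leftrightarrow c-b$, and analytic continuation in the parameters to remove the restriction $\mathrm{Re}\,c>\mathrm{Re}\,b>0$. The series route via $(a)_n(a+n)_{m-n}=(a)_m$ and Chu--Vandermonde is also correct and has the advantage of not requiring the integral representation at all. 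One small caveat on the series route: your claim that the hypotheses $|x|<1$ and $|x/(1-x)|<1$ ensure absolute convergence of the double series is accurate for real $x\in[0,\tfrac12)$ (where $|x|/(1-|x|)=|x/(1-x)|$), but for complex $x$, or for real $x\in(-1,-\tfrac12]$, one can have $|x/(1-x)|<1$ while $|x|/(1-|x|)\ge 1$, so the rearrangement is not directly justified there and one must finish by analytic continuation in $x$. Since the paper only invokes the lemma at $x=\tfrac12$ on the real line, this does not affect anything, but it is worth stating if you present the series proof as self-contained.
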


Note that $-x/(1-x)=-1$ for $x=0.5$. To find the value $f(0.5)=2^a{}_2F_1[a,c-b;c;-1]$, this 
suggests us to use the following identity (see \cite[Theorem~26, p~68]{RV60}; see also \cite{BW10}).  

\begin{lemma}\label{3.3lem2}
Let $a',b'\in \mathbb{R}$.
If $1+a'-b'\neq \{0,-1,-2,-3,\ldots\}$ and $b'<1$, then we have
$$
{}_2F_1[a',b';a'-b'+1;-1]=\frac{\Gamma(a'-b'+1)\Gamma\Big(\displaystyle\frac{a'}{2}+1\Big)}
{\Gamma(a'+1)\Gamma\Big(\displaystyle\frac{a'}{2}-b'+1\Big)}.
$$
\end{lemma}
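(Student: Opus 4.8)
The plan is to reduce the evaluation at $z=-1$ to an evaluation at $z=1$ by means of a quadratic transformation, to evaluate the latter by Gauss's summation formula \eqref{eq2}, and finally to simplify the resulting quotient of gamma functions with the Legendre duplication identity \eqref{3.1eq2} already recorded above.

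First I would invoke the classical quadratic transformation (see \cite{RV60,AAR99}), valid for $z$ in a neighbourhood of the origin,
$$
{}_2F_1[a',b';a'-b'+1;z]=(1-z)^{-a'}\;{}_2F_1\Big[\tfrac{a'}{2},\,\tfrac{a'+1}{2}-b';\,a'-b'+1;\,\tfrac{-4z}{(1-z)^{2}}\Big].
$$
The map $z\mapsto -4z/(1-z)^{2}$ sends $(-1,0]$ into the unit disc, so the identity persists on all of $(-1,0]$ by the identity theorem; setting $z=-1$ formally gives $1-z=2$ and $-4z/(1-z)^{2}=1$, so the right-hand side becomes $2^{-a'}\,{}_2F_1[\tfrac{a'}{2},\tfrac{a'+1}{2}-b';a'-b'+1;1]$. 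The point that needs care — and the one step I expect to be a genuine obstacle — is justifying the passage $z\to-1^{+}$: the left-hand series converges at $z=-1$ exactly when its parameter excess ${\rm Re}\big((a'-b'+1)-a'-b'\big)=1-2b'$ exceeds $-1$, that is, exactly under the hypothesis $b'<1$, while the right-hand series converges at argument $1$ because its parameter excess equals $\tfrac12>0$; Abel's theorem applied to both sides then turns the neighbourhood identity into an identity at $z=-1$.

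Next I would apply \eqref{eq2} with $(A,B,C)=\big(\tfrac{a'}{2},\,\tfrac{a'+1}{2}-b',\,a'-b'+1\big)$. One checks $C-A-B=\tfrac12>0$, $C-A=1+\tfrac{a'}{2}-b'$, and $C-B=\tfrac{a'+1}{2}$, so \eqref{eq2} yields ${}_2F_1[A,B;C;1]=\Gamma(a'-b'+1)\Gamma(\tfrac12)\big/\!\big(\Gamma(1+\tfrac{a'}{2}-b')\,\Gamma(\tfrac{a'+1}{2})\big)$ and hence
$$
{}_2F_1[a',b';a'-b'+1;-1]=2^{-a'}\,\frac{\Gamma(a'-b'+1)\,\Gamma(\tfrac12)}{\Gamma(1+\tfrac{a'}{2}-b')\,\Gamma(\tfrac{a'+1}{2})}.
$$

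Finally I would absorb the factor $2^{-a'}$ via \eqref{3.1eq2} taken with $c=a'+1$, namely $\Gamma(\tfrac{a'+1}{2})\,\Gamma(1+\tfrac{a'}{2})=2^{-a'}\sqrt{\pi}\,\Gamma(a'+1)$; combined with $\Gamma(\tfrac12)=\sqrt{\pi}$ this rewrites $2^{-a'}\Gamma(\tfrac12)\big/\Gamma(\tfrac{a'+1}{2})$ as $\Gamma(1+\tfrac{a'}{2})\big/\Gamma(a'+1)$, and substitution into the previous display gives exactly
$$
{}_2F_1[a',b';a'-b'+1;-1]=\frac{\Gamma(a'-b'+1)\,\Gamma(1+\tfrac{a'}{2})}{\Gamma(a'+1)\,\Gamma(1+\tfrac{a'}{2}-b')}.
$$
The hypothesis $1+a'-b'\notin\{0,-1,-2,\ldots\}$ is used only so that the hypergeometric function on the left (and the gamma quotient on the right) are defined, and when $a'$ is a non-positive integer the left-hand series terminates and is still given by this formula under the convention $1/\Gamma(-k)=0$. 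As an alternative route that stays entirely within the tools already at hand, one may instead apply Lemma~\ref{3.3lem1} at $x=1/2$ to get ${}_2F_1[a',b';a'-b'+1;-1]=2^{-a'}\,{}_2F_1[a',a'-2b'+1;a'-b'+1;1/2]$ and then sum the right-hand side by the $x=1/2$ evaluation established in Section~\ref{sec3.2} (applied with $a=a'$ and $b=a'-2b'+1$, for which indeed $(a+b+1)/2=a'-b'+1$), after which the same duplication step closes the argument.
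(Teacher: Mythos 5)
The paper does not actually prove this lemma: it is quoted directly from Rainville \cite[Theorem~26, p.~68]{RV60} (it is Kummer's summation theorem), so there is no internal argument to compare against. Your derivation is correct and is essentially the classical proof of Kummer's theorem. The quadratic transformation ${}_2F_1[a',b';1+a'-b';z]=(1-z)^{-a'}\,{}_2F_1[\tfrac{a'}{2},\tfrac{a'+1}{2}-b';1+a'-b';-4z/(1-z)^2]$ is the right one (its low-order Taylor coefficients match, and the argument map sends $(-1,0]$ into $[0,1)$ since $4t<(1+t)^2$ for $0\le t<1$); the transformed series at argument $1$ has parameter excess exactly $\tfrac12$, so \eqref{eq2} applies and gives the stated gamma quotient; and \eqref{3.1eq2} with $c=a'+1$ correctly converts $2^{-a'}\sqrt{\pi}/\Gamma(\tfrac{a'+1}{2})$ into $\Gamma(\tfrac{a'}{2}+1)/\Gamma(a'+1)$. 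You also locate precisely where the hypothesis $b'<1$ enters: it is the condition ${\rm Re}\,(c-a-b)=1-2b'>-1$ for convergence of the left-hand series at $z=-1$, which is what licenses the Abelian passage to the boundary. Your alternative route via Lemma~\ref{3.3lem1} and the $x=1/2$ evaluation of Section~\ref{sec3.2} is also sound and non-circular (that evaluation rests on Lemma~\ref{qt1}, not on the present lemma); the only caveat is that Lemma~\ref{3.3lem1} at $x=1/2$ places the transformed argument on the boundary $|-x/(1-x)|=1$, so that route needs the same Abel-type limiting argument you spelled out for the main one.
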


Comparison of the parameters $a'=a$, $b'=c-b$ and $a'-b'+1=c$ leads to
\begin{equation}\label{3.3eq1}
{}_2F_1[a,c-b;c;-1]=\frac{\Gamma(a-c+b+1)\Gamma\Big(\displaystyle\frac{a}{2}+1\Big)}
{\Gamma(a+1)\Gamma\Big(\displaystyle\frac{a}{2}-c+b+1\Big)}
\end{equation}
with the constraints
\begin{itemize}
\item $2c=a+b+1$;
\item $c\neq \{0,-1,-2,-3,\ldots\} \iff a+b\neq -(2n+1),~n\in\mathbb{N}\cup\{0\}$;
\item $c-b<1 \iff a-b<1$.
\end{itemize}
Under these conditions, \eqref{3.3eq1} leads to
\begin{align*}
f(x_1)=f(0.5)
& ={}_2F_1\Big[a,b;\frac{a+b+1}{2};\frac{1}{2}\Big] =2^a\frac{\Gamma\Big(\displaystyle\frac{a+b+1}{2}\Big)\Gamma\Big(\displaystyle\frac{a}{2}+1\Big)}
{\Gamma(a+1)\Gamma\Big(\displaystyle\frac{b+1}{2}\Big)}\\
& =\frac{2^{a-1}\,\Gamma\Big(\cfrac{a+b+1}{2}\Big)\,\Gamma\Big(\cfrac{a}{2}\Big)}{\Gamma(a)\,\Gamma\Big(\cfrac{b+1}{2}\Big)}
=\frac{\sqrt{\pi}\,\Gamma\Big(\cfrac{a+b+1}{2}\Big)}{\Gamma\Big(\cfrac{a+1}{2}\Big)
\Gamma\Big(\cfrac{b+1}{2}\Big)},
\end{align*}
where the last equality holds by \eqref{3.1eq2}.
Also as discussed in Section~\ref{sec3.2}, we have 
$$f(x_0)=f(0)={}_2F_1\Big[a,b;\frac{a+b+1}{2};0\Big]=1,
$$ 
and 
$$f(x_2)=f(1)={}_2F_1\Big[a,b;\frac{a+b+1}{2};1\Big]
=\frac{\cos\pi\displaystyle\frac{(b-a)}{2}}{\cos\pi\displaystyle\frac{(b+a)}{2}},
\quad a+b<1
$$ 
with additional constraints obtained in \eqref{3.2eq2} (here also \eqref{3.2eq2} may be relaxed!).

Thus, the second quadratic interpolation of $f(x)={}_2F_1[a,b;(a+b+1)/2;x]$ remains same as
the first quadratic interpolation obtained in Theorem~\ref{3.2thm1} but with an additional constraint
$a-b<1$. This shows that the quadratic interpolation obtained by Theorem~\ref{3.2thm1} is stronger 
than what was discussed so far using Lemma~\ref{3.3lem1} and Lemma~\ref{3.3lem2}.
A quadratic interpolation of ${}_2F_1[a,b;(a+b+1)/2;x]$ is shown in Figure~\ref{Pq2}.  
\begin{figure}[H] 
\includegraphics[width=8cm]{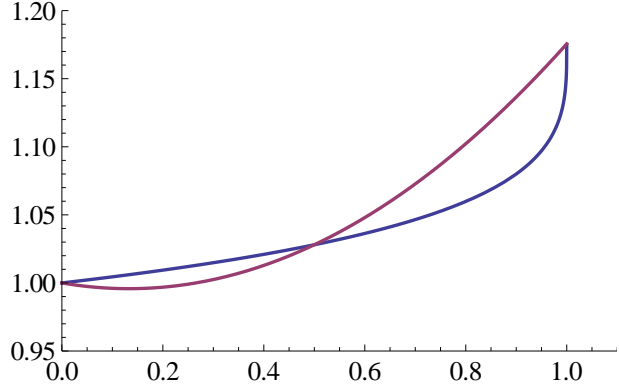}
\caption{The quadratic interpolation of ${}_2F_1[0.1,0.3;0.7;x]$ at $0$, $0.5$, and $1$.}\label{Pq2}
\end{figure}

\subsection{Error Estimates}

The error estimate in quadratic interpolation of ${}_2F_1[a,b;c;x]$
interpolating at $0,0.5,1$ in $[0,1]$ is formulated as below:
\begin{lemma}\label{3.2lem4}
Let $P_q(x)$ be a quadratic interpolation of $f(x)={}_2F_1[a,b;c;x]$
interpolating at $0,0.5,1$ in $[0,1]$. If $a,b,c\in(-3,\infty)$ with $c-a-b>3$,
then the deviation of $f(x)$ from $P_q(x)$ is estimated by
\begin{align*}
|E_q(f,x)|
& =|f(x)-P_q(x)|\\
& \le \frac{M}{6}\,|a(a+1)(a+2)b(b+1)(b+2)|\,
\frac{\Gamma(c)\Gamma(c-a-b-3)}{\Gamma(c-a)\Gamma(c-b)}
\end{align*}
for all values of $x\in[0,1]$, where $M$ is defined by
\begin{align}\label{M}
M & :=\left\{\begin{array}{ll}
\cfrac{1}{12}(3-\sqrt{3})(-1+\cfrac{1}{6}(3-\sqrt{3}))(-1+\cfrac{1}{3}(3-\sqrt{3})), & \mbox{ $x<1/2$,}\\[4mm]
-\cfrac{1}{12}(3+\sqrt{3})(-1+\cfrac{1}{6}(3+\sqrt{3}))(-1+\cfrac{1}{3}(3+\sqrt{3})), & \mbox{ $x>1/2$.}
\end{array}\right.
\end{align}
\end{lemma}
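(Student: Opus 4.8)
The plan is to follow the pattern of the proof of Lemma~\ref{2lem}, using the three‑point Lagrange remainder in place of the two‑point one. Write $f(x)={}_2F_1[a,b;c;x]$ with nodes $x_0=0,\ x_1=\tfrac12,\ x_2=1$. First one must check that $f$, defined at $x=1$ through \eqref{eq2}, is $C^3$ on the closed interval $[0,1]$, so that the Lagrange error formula is legitimate there: iterating \eqref{der-for} shows the $k$th derivative of $f$ equals $\tfrac{(a)_k(b)_k}{(c)_k}\,{}_2F_1[a+k,b+k;c+k;x]$, whose parameter defect $(c+k)-(a+k)-(b+k)=c-a-b$ exceeds $3$ for every $k$, so each of these series converges at $x=1$ (Abel's theorem) and $f$ extends smoothly to $[0,1]$. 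Granting this, for each $x\in[0,1]$ there is $\xi=\xi(x)\in(0,1)$ with
$$E_q(f,x)=f(x)-P_q(x)=\frac{f'''(\xi)}{3!}\,x\Big(x-\tfrac12\Big)(x-1),$$
hence $|E_q(f,x)|\le\tfrac16\big(\max_{[0,1]}|f'''|\big)\,\big|x(x-\tfrac12)(x-1)\big|$, and it remains to estimate the two factors.

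For the cubic factor, set $g(x)=x(x-\tfrac12)(x-1)$. Then $g'(x)=3x^2-3x+\tfrac12$ has roots $\tfrac{3\pm\sqrt3}{6}$, lying in $(0,\tfrac12)$ and $(\tfrac12,1)$ respectively; since $g$ vanishes at $0,\tfrac12,1$ and is nonnegative on $[0,\tfrac12]$ and nonpositive on $[\tfrac12,1]$, the maximum of $|g|$ over $[0,\tfrac12]$ (resp.\ over $[\tfrac12,1]$) is $|g(\tfrac{3-\sqrt3}{6})|$ (resp.\ $|g(\tfrac{3+\sqrt3}{6})|$). Using the identities $\tfrac{3-\sqrt3}{6}=2\cdot\tfrac1{12}(3-\sqrt3)$, $\tfrac{3-\sqrt3}{6}-\tfrac12=\tfrac12\big(-1+\tfrac13(3-\sqrt3)\big)$, $\tfrac{3-\sqrt3}{6}-1=-1+\tfrac16(3-\sqrt3)$ and their analogues with $3+\sqrt3$, these two maxima are exactly the two numbers $M$ in \eqref{M}. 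Thus $|x(x-\tfrac12)(x-1)|\le M$ for all $x\in[0,1]$, with $M$ chosen according to whether $x<\tfrac12$ or $x>\tfrac12$ (the value at $x=\tfrac12$ being $0$).

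For the derivative factor, three applications of \eqref{der-for} give
$$f'''(x)=\frac{a(a+1)(a+2)\,b(b+1)(b+2)}{c(c+1)(c+2)}\;{}_2F_1[a+3,b+3;c+3;x].$$
The hypotheses are tailored to apply Lemma~\ref{lem-AVV} to ${}_2F_1[a+3,b+3;c+3;x]$: $a,b,c\in(-3,\infty)$ makes all of $a+3,b+3,c+3$ positive, while $c-a-b>3$ is precisely $c+3>(a+3)+(b+3)$. Hence for $x\in[0,1]$,
$$0\le{}_2F_1[a+3,b+3;c+3;x]\le\frac{\Gamma(c+3)\,\Gamma(c-a-b-3)}{\Gamma(c-a)\,\Gamma(c-b)},$$
and since $\Gamma(t+1)=t\Gamma(t)$ gives $\Gamma(c+3)=c(c+1)(c+2)\Gamma(c)$, the factor $c(c+1)(c+2)$ cancels and
$$\max_{[0,1]}|f'''|\le|a(a+1)(a+2)\,b(b+1)(b+2)|\,\frac{\Gamma(c)\,\Gamma(c-a-b-3)}{\Gamma(c-a)\,\Gamma(c-b)}.$$
Inserting this and the bound $|x(x-\tfrac12)(x-1)|\le M$ into the estimate for $|E_q(f,x)|$ proves the lemma.

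The only genuinely delicate step is the first: justifying that $f$, together with its first three derivatives, extends continuously to the right endpoint $x=1$, which is exactly where the standing assumption $c-a-b>3$ (rather than merely $c-a-b>0$, needed just for \eqref{eq2}) is used so that the Lagrange remainder formula is valid on all of $[0,1]$. The rest — locating the critical points of $g$, iterating \eqref{der-for}, and the gamma‑function bookkeeping via $\Gamma(c+3)=c(c+1)(c+2)\Gamma(c)$ — is routine.
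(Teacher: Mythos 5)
Your proof is correct and follows essentially the same route as the paper's (very terse) argument: the three-point Lagrange remainder, the maximization of $|x(x-\tfrac12)(x-1)|$ yielding $M$, and Lemma~\ref{lem-AVV} applied to ${}_2F_1[a+3,b+3;c+3;x]$ combined with $\Gamma(c+3)=c(c+1)(c+2)\Gamma(c)$. You merely supply the details (including the justification that $f$ is $C^3$ up to $x=1$) that the paper leaves implicit.
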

\begin{proof}
It requires to estimate 
$$\max_{0\le x\le 1} \frac{|x(x-0.5)(x-1)|}{6}\max_{0\le x\le 1}|f'''(x)|,
$$
where $f(x)={}_2F_1[a,b;c;x]$. Note that 
$$
\max_{0\le x\le 1} |x(x-0.5)(x-1)|=M ~(\approx 0.0481125\cdots)
$$
obtained by \eqref{M}.
We apply the well-known derivative formula \eqref{der-for} to maximize $|f'''(x)|$, $0\le x\le 1$.
The proof follows from (\ref{eq2}), Lemma~\ref{lem-AVV}, (\ref{der-for}), and the fact that
$\Gamma(x+1)=x\Gamma(x)$. 
\end{proof}

The following result is an immediate consequence of Lemma~\ref{3.2lem4} which estimates the difference
$E_{q_1}(f,x)={}_2F_1[a,1-a;c;x]-P_{q_1}(x)$ in $[0,1]$.

\begin{corollary}\label{3.2cor1}
Let $a,c\in\mathbb{R}$ be such that $-3<a<4$ and $c>4$. 
Then the deviation of ${}_2F_1[a,1-a;c;x]$ from $P_{q_1}(x)$ is estimated by
\begin{align*}
|E_{q_1}(f,x)|
& =|f(x)-P_{q_1}(x)|\\
& \le \frac{M}{6}\,|a(a+1)(a+2)(1-a)(2-a)(3-a)|\,
\frac{\Gamma(c)\Gamma(c-4)}{\Gamma(c-a)\Gamma(c+a-1)}
\end{align*}
for all values of $x\in[0,1]$, where $M$ is obtained by \eqref{M}.
\end{corollary}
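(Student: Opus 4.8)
The plan is to obtain Corollary~\ref{3.2cor1} as a direct specialization of Lemma~\ref{3.2lem4}, since the function ${}_2F_1[a,1-a;c;x]$ appearing in Theorem~\ref{3.1thm1} is exactly the hypergeometric function ${}_2F_1[a,b;c;x]$ with the substitution $b=1-a$, and $P_{q_1}(x)$ is the corresponding quadratic interpolant at the nodes $0,0.5,1$. So the first step is simply to verify that the hypotheses of Lemma~\ref{3.2lem4} are met under the stated assumptions $-3<a<4$ and $c>4$: we need $a,b,c\in(-3,\infty)$ and $c-a-b>3$. With $b=1-a$, the condition $c-a-b>3$ becomes $c-a-(1-a)=c-1>3$, i.e. $c>4$, which is precisely the hypothesis on $c$. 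The condition $b=1-a\in(-3,\infty)$ is equivalent to $a<4$, and $a\in(-3,\infty)$ is the remaining constraint; together these give $-3<a<4$, exactly as assumed.

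Next I would substitute $b=1-a$ into the conclusion of Lemma~\ref{3.2lem4}. The factor $|a(a+1)(a+2)\,b(b+1)(b+2)|$ becomes $|a(a+1)(a+2)(1-a)(2-a)(3-a)|$, since $b=1-a$, $b+1=2-a$, $b+2=3-a$. In the quotient of gamma functions, $\Gamma(c-a)$ is unchanged, $\Gamma(c-b)=\Gamma(c-1+a)=\Gamma(c+a-1)$, and $\Gamma(c-a-b-3)=\Gamma(c-1-3)=\Gamma(c-4)$; also $\Gamma(c)$ is unchanged. This yields exactly the bound
$$
|E_{q_1}(f,x)|\le \frac{M}{6}\,|a(a+1)(a+2)(1-a)(2-a)(3-a)|\,\frac{\Gamma(c)\Gamma(c-4)}{\Gamma(c-a)\Gamma(c+a-1)},
$$
with $M$ given by \eqref{M}, as claimed. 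The constant $M$ and its piecewise description carry over verbatim because the interpolation nodes $0,0.5,1$ are the same, so $\max_{0\le x\le 1}|x(x-0.5)(x-1)|$ is the same quantity.

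One small point to confirm is that $P_{q_1}(x)$ as defined in Theorem~\ref{3.1thm1} genuinely coincides with the generic $P_q(x)$ of Lemma~\ref{3.2lem4} specialized to $b=1-a$; this is immediate from the construction via Lagrange fundamental polynomials, since both are determined uniquely by the three interpolation values $f(0)=1$, $f(0.5)$, $f(1)$ at the nodes $0,0.5,1$, and those values agree once $b=1-a$ (using the Bailey identity \eqref{3.1eq1} for $f(0.5)$, which is the $b=1-a$ case of the value used in Lemma~\ref{3.2lem4}). There is no real obstacle here; the only thing to be careful about is the bookkeeping of which gamma arguments collapse under $b=1-a$ and making sure the parameter ranges translate correctly (in particular that $c>4$ is the sharp form of $c-a-b>3$ after substitution, and that the endpoints of the $a$-interval come from requiring both $a>-3$ and $b=1-a>-3$). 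Everything else is a routine rewriting of Lemma~\ref{3.2lem4}.
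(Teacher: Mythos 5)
Your proposal is correct and matches the paper's approach exactly: the paper presents this corollary as an immediate consequence of Lemma~\ref{3.2lem4} via the substitution $b=1-a$, which is precisely the specialization you carry out, and your verification of the hypotheses ($c-a-b>3$ becoming $c>4$, and $a,b\in(-3,\infty)$ becoming $-3<a<4$) and of the resulting gamma-function quotient is accurate.
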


\begin{remark}\label{3.2rem1}
It follows from Corollary~\ref{3.2cor1} that there is no error for either of the choices 
$a=-2,-1,0,1,2,3$. In other words, for either of these choices,
$E_{q_1}(f,x)$ vanishes. 
\end{remark}

Similarly, as a consequence of Lemma~\ref{3.2lem4}, we obtain

\begin{corollary}\label{3.2cor2}
Let $a,b\in\mathbb{R}$ be such that $-7<a+b<-5$. 
Then the deviation of ${}_2F_1[a,b;(a+b+1)/2;x]$ from $P_{q_2}(x)$ is estimated by
\begin{align*}
|E_{q_2}(f,x)|
& =|f(x)-P_{q_2}(x)|\\
& \le \frac{M}{6}\,|a(a+1)(a+2)b(b+1)(b+2)|\,
\frac{\Gamma\Big(\cfrac{a+b+1}{2}\Big)\Gamma\Big(\cfrac{-a-b-5}{2}\Big)}
{\Gamma\Big(\cfrac{b-a+1}{2}\Big)\Gamma\Big(\cfrac{a-b+1}{2}\Big)}
\end{align*}
for all values of $x\in[0,1]$, where $M$ is obtained by \eqref{M}.
\end{corollary}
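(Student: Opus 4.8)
The plan is to obtain Corollary~\ref{3.2cor2} as a direct specialization of Lemma~\ref{3.2lem4} to the value $c=(a+b+1)/2$, in exact analogy with the way Corollary~\ref{3.2cor1} was deduced (there one specialized instead to $b=1-a$). The point to record first is that the polynomial $P_{q_2}(x)$ constructed in Theorem~\ref{3.2thm1} is, by its very definition, the quadratic interpolation of $f(x)={}_2F_1[a,b;(a+b+1)/2;x]$ at the three nodes $0,\,0.5,\,1$; consequently $E_{q_2}(f,x)=f(x)-P_{q_2}(x)$ is precisely the error quantity controlled by Lemma~\ref{3.2lem4}, and all that is needed is to verify the hypotheses of that lemma in the present setting and then rewrite its conclusion under the substitution.

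For the hypotheses, I would translate ``$a,b,c\in(-3,\infty)$ with $c-a-b>3$'' into conditions on $a,b$ alone. Since $c=(a+b+1)/2$ yields $c-a-b=(1-a-b)/2$, the requirement $c-a-b>3$ is equivalent to $a+b<-5$, while $c>-3$ is equivalent to $a+b>-7$; together these give the window $-7<a+b<-5$ of the statement (the individual bounds $a>-3$, $b>-3$ being carried over from the ambient hypotheses of Lemma~\ref{3.2lem4}). In this window one checks that $a+b<1$, that $a+b\neq-(2n+1)$ for $n\in\mathbb{N}\cup\{0\}$, and that the exceptional values in \eqref{3.2eq2} are avoided, so that $P_{q_2}$ is well defined; moreover the function ${}_2F_1[a+3,b+3;c+3;x]$ arising from three applications of the derivative formula \eqref{der-for} has positive parameters $a+3,b+3$ and satisfies $(c+3)-(a+3)-(b+3)=c-a-b-3>0$, so Lemma~\ref{lem-AVV} bounds its maximum on $[0,1]$ by $\Gamma(c+3)\Gamma(c-a-b-3)/\big(\Gamma(c-a)\Gamma(c-b)\big)$, exactly as in the proof of Lemma~\ref{3.2lem4}; combining with $\Gamma(c+3)=c(c+1)(c+2)\Gamma(c)$ absorbs the factor $c(c+1)(c+2)$ coming from \eqref{der-for}.

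It then remains to rewrite the gamma-function quotient of Lemma~\ref{3.2lem4} under $c=(a+b+1)/2$. Here
$$c-a-b-3=\tfrac12(-a-b-5),\qquad c-a=\tfrac12(b-a+1),\qquad c-b=\tfrac12(a-b+1),$$
so that $\frac{\Gamma(c)\Gamma(c-a-b-3)}{\Gamma(c-a)\Gamma(c-b)}$ becomes
$$\frac{\Gamma\!\big(\tfrac{a+b+1}{2}\big)\,\Gamma\!\big(\tfrac{-a-b-5}{2}\big)}{\Gamma\!\big(\tfrac{b-a+1}{2}\big)\,\Gamma\!\big(\tfrac{a-b+1}{2}\big)},$$
while the polynomial prefactor $|a(a+1)(a+2)\,b(b+1)(b+2)|$ and the constant $M/6$ from \eqref{M} pass through verbatim, since (unlike in Corollary~\ref{3.2cor1}) $b$ is not specialized. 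Substituting these into the conclusion of Lemma~\ref{3.2lem4} gives the asserted inequality. I do not expect any genuine obstacle here: the argument is pure bookkeeping, and the only thing needing care is the consistency of the parameter constraints, namely that the window $-7<a+b<-5$ is compatible with \eqref{3.2eq2} (so that $P_{q_2}$ exists) and with the individual lower bounds $a,b>-3$ needed to invoke Lemma~\ref{lem-AVV} for ${}_2F_1[a+3,b+3;c+3;x]$, and that all gamma arguments appearing in the final bound are admissible throughout this range.
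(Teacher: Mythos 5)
Your proposal is correct and is exactly the route the paper takes: the paper offers no separate proof of Corollary~\ref{3.2cor2}, deducing it from Lemma~\ref{3.2lem4} by the substitution $c=(a+b+1)/2$, under which $c>-3$ and $c-a-b>3$ become $-7<a+b<-5$ and the gamma quotient transforms as you compute. You even rightly flag the one delicate point, namely that invoking Lemma~\ref{lem-AVV} for ${}_2F_1[a+3,b+3;c+3;x]$ still requires $a>-3$ and $b>-3$ individually, a hypothesis the corollary's statement leaves implicit.
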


\begin{remark}\label{3.2rem2}
It follows from Corollary~\ref{3.2cor2} that 
since $E_{q_2}(f,x)$ vanishes for the choices $a=-2,-1,0$ and $b=-2,-1,0$, 
there is no error for these choices of the parameters $a$ and $b$.
\end{remark}

Now we describe a bit deeper analysis on the error obtained in Corollary~\ref{3.2cor1} through the following lemma
which is a consequence of Lemma~\ref{lem-AVV-p36}. A similar analysis can be described for Corollary~\ref{3.2cor2}.

\begin{lemma}\label{3.2lem1}
Let $a,c\in\mathbb{R}$ be such that $c>4$. If either $1<a<4$ or $-3<a<0$ holds, then the quotient
$$
\frac{\Gamma(c)\Gamma(c-4)}{\Gamma(c-a)\Gamma(c+a-1)}
$$
decreases when $c$ increases.
\end{lemma}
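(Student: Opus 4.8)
The plan is to pass to logarithms and show that $\log Q(c)$ has negative derivative on $(4,\infty)$, where
$$
Q(c):=\frac{\Gamma(c)\,\Gamma(c-4)}{\Gamma(c-a)\,\Gamma(c+a-1)} .
$$
First I would record that, for the ranges of $a$ in the statement and for $c>4$, all four arguments $c,\ c-4,\ c-a,\ c+a-1$ lie in $(0,\infty)$: indeed $c-a>c-4>0$ whenever $a<4$, and $c+a-1>c-4>0$ whenever $a>-3$. Hence $\log Q$ is differentiable on $(4,\infty)$ and
$$
\frac{d}{dc}\log Q(c)=\psi(c)+\psi(c-4)-\psi(c-a)-\psi(c+a-1),
$$
where $\psi=\Gamma'/\Gamma$ is the digamma function, which is increasing on $(0,\infty)$ by Lemma~\ref{lem-AVV-p36} (log-convexity of $\Gamma$) and in fact strictly increasing. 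So the whole statement reduces to the inequality $\psi(c)+\psi(c-4)<\psi(c-a)+\psi(c+a-1)$ for $c>4$.

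The key point is that this inequality follows by grouping the four digamma terms appropriately in each of the two parameter windows, so that each group becomes the difference of $\psi$ at a smaller and a larger argument. If $1<a<4$, I would write the derivative as $\bigl[\psi(c)-\psi(c+a-1)\bigr]+\bigl[\psi(c-4)-\psi(c-a)\bigr]$; here $c+a-1>c$ because $a>1$, and $c-a>c-4$ because $a<4$, so both bracketed terms are negative by monotonicity of $\psi$. If instead $-3<a<0$, I would write the derivative as $\bigl[\psi(c)-\psi(c-a)\bigr]+\bigl[\psi(c-4)-\psi(c+a-1)\bigr]$; here $c-a>c$ because $a<0$, and $c+a-1>c-4$ because $a>-3$, so again both bracketed terms are negative. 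In either case $\frac{d}{dc}\log Q(c)<0$, hence $Q$ is strictly decreasing on $(4,\infty)$, which is the assertion. (If one only invokes weak monotonicity of $\psi$, this gives $Q$ non-increasing; strict decrease uses that $\log\Gamma$ is strictly convex.)

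There is no serious obstacle: the entire content is the elementary sub-fact that $x\mapsto \Gamma(x)/\Gamma(x+t)$ is strictly decreasing on $(0,\infty)$ for each fixed $t>0$, applied twice. The only thing demanding a little care is choosing the correct pairing of the four digamma terms in each of the two parameter windows and checking that all arguments stay positive; the dichotomy ``$1<a<4$ or $-3<a<0$'' in the hypothesis is exactly what makes both pairings come out with the right sign, whereas for $0<a<1$ one of the brackets would change sign and a separate argument would be needed. The same scheme — pass to $\log$, differentiate, pair the $\Gamma$-arguments by size, invoke monotonicity of $\psi$ — would also handle the analogous statement for Corollary~\ref{3.2cor2} referred to just before the lemma, now pairing the arguments of the four gamma factors occurring there.
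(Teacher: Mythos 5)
Your proposal is correct and follows essentially the same route as the paper: both compute the logarithmic derivative $g'/g=\psi(c)+\psi(c-4)-\psi(c-a)-\psi(c+a-1)$ and make it negative by pairing the terms as $[\psi(c)-\psi(c+a-1)]+[\psi(c-4)-\psi(c-a)]$ when $1<a<4$ and as $[\psi(c)-\psi(c-a)]+[\psi(c-4)-\psi(c+a-1)]$ when $-3<a<0$, invoking the monotonicity of $\Gamma'/\Gamma$ from Lemma~\ref{lem-AVV-p36}. No discrepancies to report.
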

\begin{proof}
We use Lemma~\ref{lem-AVV-p36}. Since $c-a>c-4>0$, in one hand we have
$$
\frac{\Gamma'(c-4)}{\Gamma(c-4)}-\frac{\Gamma'(c-a)}{\Gamma(c-a)}<0.
$$
On the other hand, since $c<c+a-1$, we have
$$
\frac{\Gamma'(c)}{\Gamma(c)}-\frac{\Gamma'(c+a-1)}{\Gamma(c+a-1)}<0.
$$
Thus, if 
$$
g(c)=\frac{\Gamma(c)\Gamma(c-4)}{\Gamma(c-a)\Gamma(c+a-1)},
$$
it follows that
\begin{align*}
\frac{g'(c)}{g(c)}
& =\frac{\Gamma'(c)}{\Gamma(c)}+\frac{\Gamma'(c-4)}{\Gamma(c-4)}
-\frac{\Gamma'(c-a)}{\Gamma(c-a)}-\frac{\Gamma'(c+a-1)}{\Gamma(c+a-1)}\\
& = \left(\frac{\Gamma'(c-4)}{\Gamma(c-4)}
-\frac{\Gamma'(c-a)}{\Gamma(c-a)}\right)+
\left(\frac{\Gamma'(c)}{\Gamma(c)}-\frac{\Gamma'(c+a-1)}{\Gamma(c+a-1)}\right)\\
& < 0.
\end{align*}
By the definition of the gamma function, obviously, one can see that $\Gamma(x)>0$ for $x>0$.
This shows that $g(c)>0$ and hence $g'(c)<0$. Thus, $g(c)$ decreases for $1<a<4<c$.

For $c>4$, if $-3<a<0$ holds then we consider the rearrangement
$$
\frac{g'(c)}{g(c)}=\left(\frac{\Gamma'(c)}{\Gamma(c)}
-\frac{\Gamma'(c-a)}{\Gamma(c-a)}\right)+\left(\frac{\Gamma'(c-4)}{\Gamma(c-4)}-\frac{\Gamma'(c+a-1)}{\Gamma(c+a-1)}\right)
$$
and show that ${g'(c)}/{g(c)}<0$.
\end{proof}

Using Mathematica or other similar tools, one can see that Lemma~\ref{3.2lem1} even holds true
for the remaining range $0\le a\le 1$. This suggests us to pose the following conjecture.

\begin{conjecture}\label{3.2conj}
Let $a,c\in\mathbb{R}$ be such that $0\le a\le 1$ and $c>4$. Then the quotient
$$
\frac{\Gamma(c)\Gamma(c-4)}{\Gamma(c-a)\Gamma(c+a-1)}
$$
decreases when $c$ increases.
\end{conjecture}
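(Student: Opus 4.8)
The plan is to run the same logarithmic-derivative computation as in the proof of Lemma~\ref{3.2lem1}, but to group the four digamma terms so that their sign is forced by a convexity (symmetrization) argument rather than by an ad hoc pairing. Write $\psi=\Gamma'/\Gamma$ and put
$$
g(c)=\frac{\Gamma(c)\,\Gamma(c-4)}{\Gamma(c-a)\,\Gamma(c+a-1)},\qquad
\Phi(a):=\frac{g'(c)}{g(c)}=\psi(c)+\psi(c-4)-\psi(c-a)-\psi(c+a-1).
$$
For $0\le a\le 1$ and $c>4$ the four arguments $c,\,c-4,\,c-a,\,c+a-1$ all lie in $(0,\infty)$, so $g(c)>0$; hence it suffices to prove $\Phi(a)<0$ for $a\in[0,1]$, which then gives $g'(c)<0$.

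First I would record two elementary properties of $\Phi$ as a function of $a$ with $c$ held fixed. It is symmetric about $a=1/2$: replacing $a$ by $1-a$ merely swaps the two ``moving'' arguments $c-a$ and $c+a-1$, so $\Phi(1-a)=\Phi(a)$. Moreover these two arguments have constant sum $2c-1$, so their behaviour is governed by the spread $|a-1/2|$. Differentiating, $\Phi'(a)=\psi'(c-a)-\psi'(c+a-1)$; since the trigamma function $\psi'(x)=\sum_{n\ge0}(x+n)^{-2}$ is strictly decreasing on $(0,\infty)$ — equivalently, $\psi$ is concave on $(0,\infty)$, a standard fact complementary to the log-convexity of $\Gamma$ in Lemma~\ref{lem-AVV-p36} — we get $\Phi'(a)<0$ for $a<1/2$ and $\Phi'(a)>0$ for $a>1/2$. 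Thus $\Phi$ is unimodal with a minimum at $a=1/2$, and its maximum over $[0,1]$ is attained at an endpoint, where $\Phi(0)=\Phi(1)$ by symmetry.

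It then remains only to evaluate at $a=0$: $\Phi(0)=\psi(c)+\psi(c-4)-\psi(c)-\psi(c-1)=\psi(c-4)-\psi(c-1)<0$, because $c-4<c-1$ and $\psi$ is strictly increasing on $(0,\infty)$ by Lemma~\ref{lem-AVV-p36}. Combining, $\Phi(a)\le\Phi(0)<0$ for every $a\in[0,1]$, so $g'(c)<0$ and $g$ is strictly decreasing in $c$, which is the conjecture. The same argument in fact works for all $a$ with $0\le a\le 4$ (and, via the symmetry $\Phi(a)=\Phi(1-a)$, for $-3<a<0$ too), so it simultaneously re-proves Lemma~\ref{3.2lem1}.

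I do not expect a serious obstacle: the only input beyond the excerpt is the concavity of the digamma function, i.e.\ $\psi''<0$, which is immediate from the series for $\psi'$ (and, if one prefers to stay self-contained, is just $(\log\Gamma)'''<0$, provable termwise from the Weierstrass product exactly as log-convexity is). The only points requiring a little care are the admissibility of the gamma arguments — one must check $c-a>0$ and $c+a-1>0$ on the range in play, which here follows from $c>4$ and $0\le a\le1$ — and verifying that the unimodality argument really does place the maximum of $\Phi$ at the boundary of the interval of $a$-values under consideration.
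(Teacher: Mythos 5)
Your argument is correct, and it is worth emphasizing that the paper offers \emph{no} proof of Conjecture~\ref{3.2conj}: the statement is supported there only by numerical evidence, with the rigorous argument of Lemma~\ref{3.2lem1} covering only the complementary ranges $1<a<4$ and $-3<a<0$. The paper's method is to split $g'/g=\psi(c)+\psi(c-4)-\psi(c-a)-\psi(c+a-1)$ (with $\psi=\Gamma'/\Gamma$) into two differences each of which is negative by the monotonicity of $\psi$ from Lemma~\ref{lem-AVV-p36}; that pairing genuinely fails on $0\le a\le 1$, where no grouping into two sign-definite digamma differences exists (e.g.\ at $a=1/2$ one has $c-4<c-a<c<c+a-1$ fails to order correctly). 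Your key new input is that the two ``moving'' arguments $c-a$ and $c+a-1$ have constant sum $2c-1$, so that concavity of $\psi$ (i.e.\ $\psi'$ decreasing, immediate from $\psi'(x)=\sum_{n\ge0}(x+n)^{-2}$, a fact not in the paper but entirely standard) makes $\Phi(a)$ symmetric about $a=1/2$ and unimodal with an interior minimum; this pushes the maximum to the endpoints $a=0,1$, where the quantity collapses to $\psi(c-4)-\psi(c-1)<0$ by the paper's own Lemma~\ref{lem-AVV-p36}. All gamma arguments are positive on the stated range, so $g>0$ and $g'<0$ follows. The computations check out ($\Phi(1-a)=\Phi(a)$, $\Phi'(a)=\psi'(c-a)-\psi'(c+a-1)$, $\Phi(0)=\psi(c-4)-\psi(c-1)$), and your closing observation is also right: the same symmetrization handles all $a\in(-3,4)$ at once, so it both settles the conjecture and subsumes Lemma~\ref{3.2lem1} in a single cleaner argument.
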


Thus, we observe that when $c>4$ increases then the error $E_{q_1}(f,x)$ estimated in 
Corollary~\ref{3.2cor1} decreases (see also Figure~\ref{Eq1fx-1} and Figure~\ref{Eq1fx-2}).

\begin{figure}[H] 
\begin{center}
\includegraphics[width=7cm]{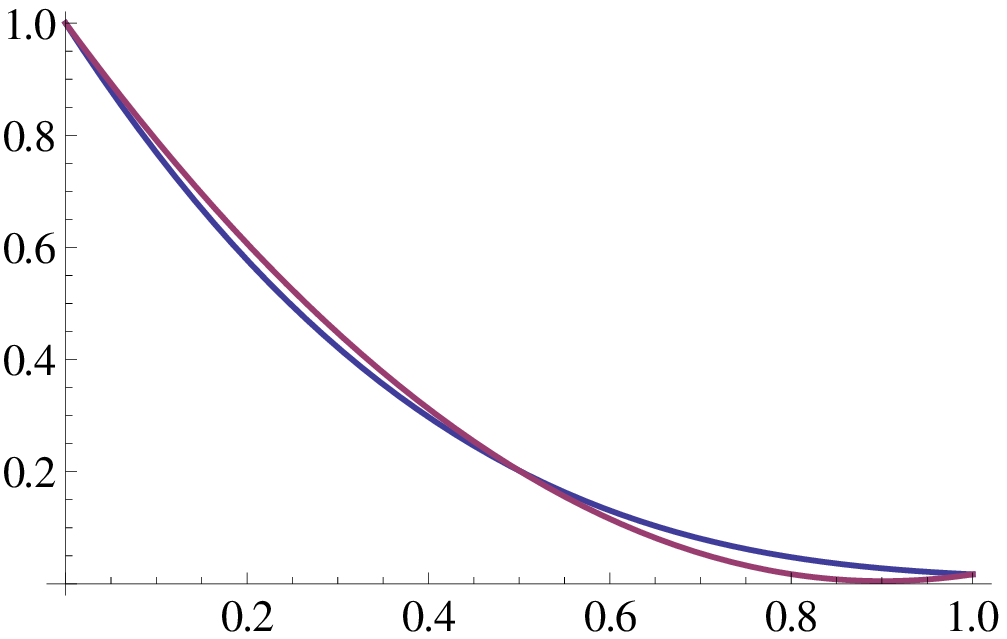}
\includegraphics[width=7cm]{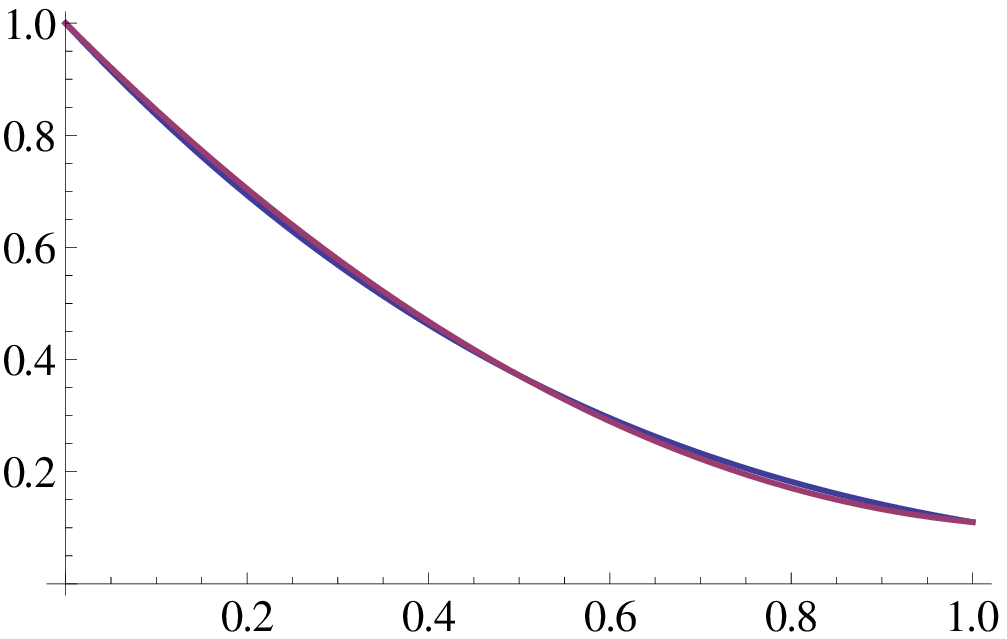}
\end{center}
\caption{The error estimate $E_{q_1}(f,x)$ when $a=3.9$ and $c$ increases from $4.5$ to $6.5$.}\label{Eq1fx-1}
\end{figure}

\begin{figure}[H] 
\begin{center}
\includegraphics[width=7cm]{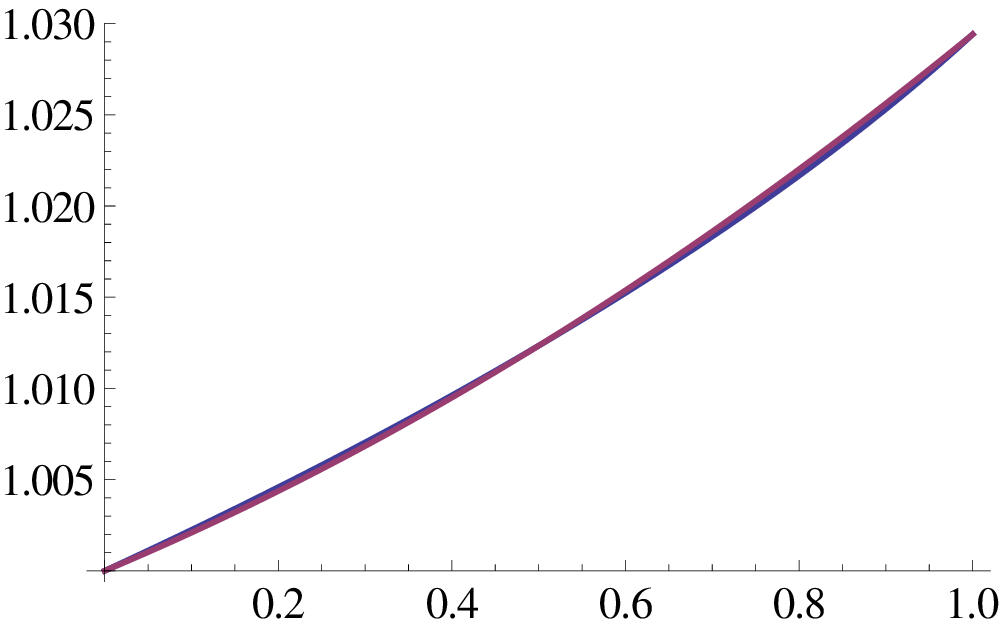}
\includegraphics[width=7cm]{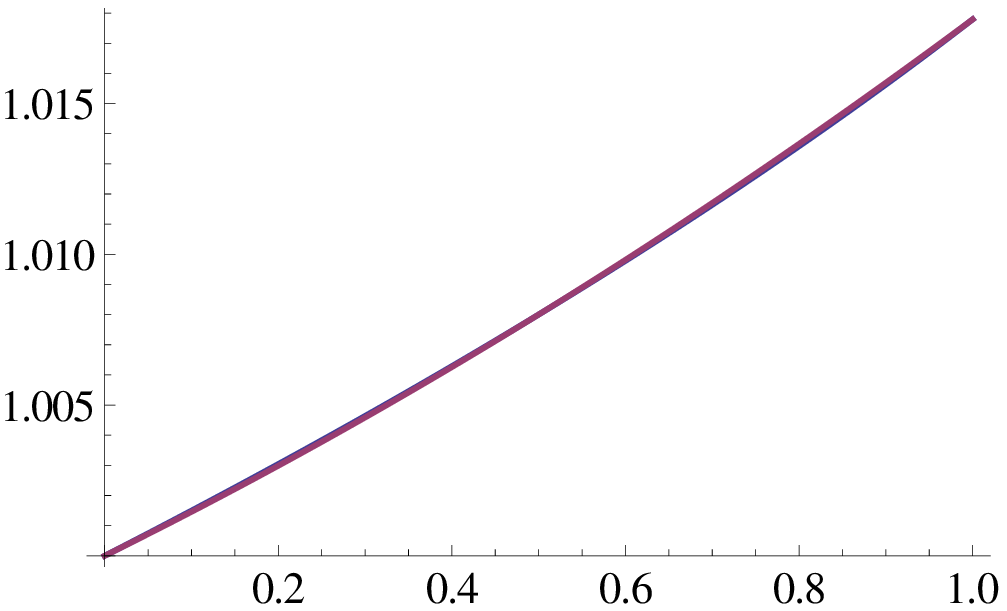}
\end{center}
\caption{The error estimate $E_{q_1}(f,x)$ when $a=0.9$ and $c$ increases from $4.1$ to $6.1$.}\label{Eq1fx-2}
\end{figure}

\begin{table}[H]
\begin{tabular}{|c|c|c|c|c|c|}
\hline 
Nodes $x_i$ & $0$ & $0.25$ & $0.5$ & $0.75$ & $1$ \\
\hline 
Actual values ${}_2F_1[3.9,-2.9;5;x_i]$ & $1$ & $0.5372$ & $0.2516$ & $0.0998$ & $0.0367$ \\
\hline 
Polynomial approximations  & $1$ & $0.5591$ & $0.2516$ & $0.0775$ & $0.0367$\\ 
by $P_{q_1}(x_i)$ &&&&&\\
\hline
Validity of error bounds & $0$ & $0.0219<0.0274$ & $0$ & $0.0223<0.0274$ & $0$\\ 
 by $E_{q_1}(f,x_i)$ &&&&&\\
\hline
&&&&&\\[-4.5mm]
\hline
Actual values ${}_2F_1[3.9,-2.9;6;x_i]$ & $1$ & $0.6027$ & $0.3358$ & $0.1724$ & $0.0845$ \\
\hline 
Polynomial approximations  & $1$ & $0.6163$ & $0.3358$ & $0.1585$ & $0.0845$\\ 
by $P_{q_1}(x_i)$ &&&&&\\
\hline
Validity of error bounds & $0$ & $0.0136<0.0158$ & $0$ & $0.0139<0.0158$ & $0$\\ 
 by $E_{q_1}(f,x_i)$ &&&&&\\
\hline
\end{tabular}
\caption{Comparison of the functional and quadratic polynomial values}\label{T2}
\end{table}

\begin{table}[H]
\begin{tabular}{|c|c|c|c|c|c|}
\hline 
Nodes $x_i$ & $0$ & $0.25$ & $0.5$ & $0.75$ & $1$ \\
\hline 
Actual values ${}_2F_1[0.9,0.1;5;x_i]$ & $1$ & $1.0047$ & $1.0099$ & $1.0158$ & $1.0227$ \\
\hline 
Polynomial approximations  & $1$ & $1.0046$ & $1.0099$ & $1.0160$ & $1.0227$\\ 
by $P_{q_1}(x_i)$ &&&&&\\
\hline
Validity of error bounds & $0$ & $0.0001<0.0016$ & $0$ & $0.0002<0.0016$ & $0$\\ 
 by $E_{q_1}(f,x_i)$ &&&&&\\
\hline
&&&&&\\[-4.5mm]
\hline
Actual values ${}_2F_1[0.9,0.1;6;x_i]$ & $1$ & $1.0039$ & $1.0082$ & $1.0128$ & $1.0182$ \\
\hline 
Polynomial approximations  & $1$ & $1.0038$ & $1.0082$ & $1.0129$ & $1.0182$\\ 
by $P_{q_1}(x_i)$ &&&&&\\
\hline
Validity of error bounds & $0$ & $0.0001<0.0004$ & $0$ & $0.0001<0.0004$ & $0$\\ 
 by $E_{q_1}(f,x_i)$ &&&&&\\
\hline
\end{tabular}
\caption{Comparison of the functional and quadratic polynomial values}\label{T3}
\end{table}

Figure~\ref{Eq1fx-1} and Figure~\ref{Eq1fx-2} describe the quadratic interpolation of the hypergeometric functions 
${}_2F_1[a,1-a,c,x]$ at $0$, $0.5$ and $1$, whereas
Table~\ref{T2} and Table~\ref{T3} respectively compare the values of the hypergeometric functions
up to four decimal places with its interpolating polynomial values in the interval $[0,1]$ 
for the choice of parameters $a=3.9$ and $a=0.9$, $c=5$ and $c=6$. 
Figures~\ref{Eq1fx-1}--\ref{Eq1fx-2} and Tables~\ref{T2}--\ref{T3} also indicate errors at various points 
within the unit interval except at the interpolating points at $x=0,0.5,1$. 

The error estimate $|E_{q_2}(f,x)|$ for the function ${}_2F_1[a,b;(a+b+1)/2;x]$ can be analyzed 
in a similar way, and hence we omit the proof.

\section{An Application}
In this section, we brief on interpolation of a continued fraction 
that converges to a quotient of two hypergeometric functions. 
Gauss used the contiguous relations to give several ways to write a quotient 
of two hypergeometric functions as a continued fraction. For instance, it is well-known that
\begin{equation}\label{cf}
\frac{{}_2F_1[a+1,b;c+1;x]}{{}_2F_1[a,b;c;x]}
= \cfrac{1}{1 + \cfrac{\cfrac{(a-c)b}{c(c+1)} x}{1 + \cfrac{\cfrac{(b-c-1)(a+1)}{(c+1)(c+2)} x}{1 
+ \cfrac{\cfrac{(a-c-1)(b+1)}{(c+2)(c+3)} x}
{1 + \cfrac{\cfrac{(b-c-2)(a+2)}{(c+3)(c+4)} x}{1 + {}\ddots}}}}}, \quad |x|<1.
\end{equation}

In one hand, if we adopt the basic linear interpolation method that we discussed in Section~2 
(that is, linear interpolation directly) to the function
$$
g(x)=\frac{{}_2F_1[a+1,b;c+1;x]}{{}_2F_1[a,b;c;x]}
$$
at $x_0=0$ and $x_1=1$, we obtain the linear interpolation 
of the above continued fraction in the following form:
$$
R_l(x)=g(x_0)+\frac{x-x_0}{x_1-x_0}(g(x)-g(x_0))=1+\Big(\frac{b}{c-b}\Big)x, \quad c-b>a,
$$ 
since $g(x_0)=1$ and $g(x_1)=c/(c-b)$. For the choice $a=1,b=2,c=6$, this approximation is also shown in Figure~\ref{Rl}.
\begin{figure}[H] 
\includegraphics[width=8cm]{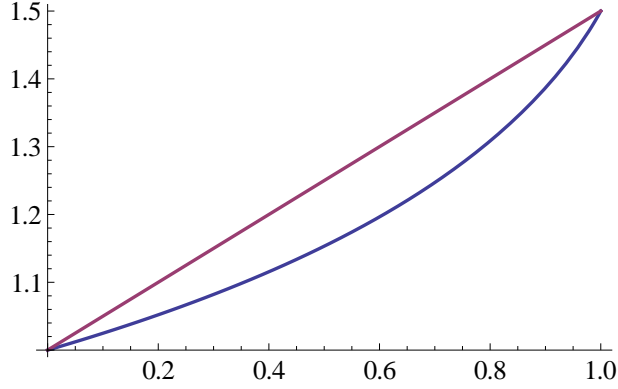}
\caption{Approximation of ${{}_2F_1[a+1,b;c+1;x]}/{{}_2F_1[a,b;c;x]}$ through $R_l(x)$.}\label{Rl}
\end{figure}

On the other hand, an application of linear interpolation of ${}_2F_1[a,b;c;x]$ obtained in Section~\ref{sec2}
leads to the following approximation of the above continued fraction
in terms of ratio of polynomial approximation (we call this {\em rational interpolation}):
\begin{align*}
R_r(x)
&=\frac{1}{P_l(x)}\left(\frac{\Gamma(c+1) \Gamma(c-a-b) -\Gamma(c-a) \Gamma(c-b+1)}
{\Gamma(c-a) \Gamma(c-b+1)}x+1 \right)\\
&=\frac{\Big[\cfrac{c\Gamma(c)\Gamma(c-a-b)}{c-b}-\Gamma(c-a)\Gamma(c-b)\Big]x+\Gamma(c-a)\Gamma(c-b)}
{[\Gamma(c)\Gamma(c-a-b)-\Gamma(c-a)\Gamma(c-b)]x+\Gamma(c-a)\Gamma(c-b)}\\
& =1+\frac{b}{c-b} \left[\frac{\Gamma(c-a-b) \Gamma(c) \,x}{[\Gamma(c) \Gamma(c-a-b)
-\Gamma(c-a) \Gamma(c-b)] \,x+ \Gamma(c-a) \Gamma(c-b)}\right],
\end{align*}
where $c-a-b>0$. For the choice $a=1,b=2,c=6$, this approximation is also shown in Figure~\ref{Rr}.
\begin{figure}[H] 
\includegraphics[width=8cm]{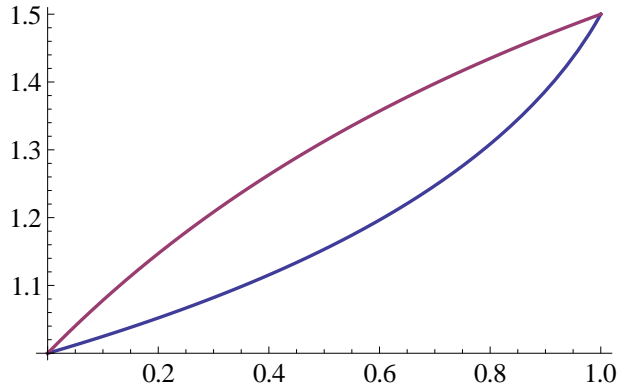}
\caption{Approximation of ${{}_2F_1[a+1,b;c+1;x]}/{{}_2F_1[a,b;c;x]}$ through $R_r(x)$.}\label{Rr}
\end{figure}

Observe that 
$$R_r(x_0)=1=R_l(x_0)~~\mbox{ and }~~R_r(x_1)=\frac{c}{c-b}=R_l(x_1)
$$
and hence $R_r$ also interpolates the continued fraction under consideration at $0$ and $1$. 
Further we observe that both the approximations $R_l(x)$ and $R_r(x)$ of the continued fraction 
are easy to obtain and the first approximation
(i.e., $R_l(x)$) is in a simpler form than $R_r(x)$ as expected.
Now, it would be interesting to know which one would give the best approximation to the continued 
fraction under consideration. With the special choice $a=1,b=2,c=6$, we see from
Figure~\ref{Rl} and Figure~\ref{Rr} that among these two, $R_l(x)$ is the better approximation than 
$R_r(x)$. One may ask: does it happen for arbitrary parameters $a,b,c$?
Since $R_l(x)=R_r(x)$ if and only if $\Gamma(c)\Gamma(c-a-b)=\Gamma(c-a)\Gamma(c-b)$,
the answer to this affirmative question is yes except when $\Gamma(c)\Gamma(c-a-b)=\Gamma(c-a)\Gamma(c-b)$.

This leads to the following result:

\begin{theorem}
Let $R_l(x)$ and $R_r(x)$ be respectively the linear interpolation and the rational interpolation
of the quotient ${{}_2F_1[a+1,b;c+1;x]}/{{}_2F_1[a,b;c;x]}$ (euivalently, of the continued fraction
\eqref{cf}). Then $R_l(x)$ and $R_r(x)$ coincide each other if and only if 
$\Gamma(c)\Gamma(c-a-b)=\Gamma(c-a)\Gamma(c-b)$ holds for $c-a-b>0$.
\end{theorem}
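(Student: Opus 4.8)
The plan is to establish the equivalence directly from the explicit formulas for $R_l(x)$ and $R_r(x)$ that were derived just above the statement, since everything reduces to a short algebraic comparison. Recall that under the standing hypothesis $c-a-b>0$ (with $c$ not a non-positive integer, so that all the gamma values appearing are finite and nonzero, and $c-b>a$ so that $g(x_0)=1$, $g(x_1)=c/(c-b)$ make sense) we have
\[
R_l(x)=1+\frac{b}{c-b}\,x,
\qquad
R_r(x)=1+\frac{b}{c-b}\left[\frac{\Gamma(c)\Gamma(c-a-b)\,x}{[\Gamma(c)\Gamma(c-a-b)-\Gamma(c-a)\Gamma(c-b)]\,x+\Gamma(c-a)\Gamma(c-b)}\right].
\]
So $R_l(x)=R_r(x)$ for all $x\in[0,1]$ if and only if the two bracketed factors agree, i.e.\ if and only if
\[
x=\frac{\Gamma(c)\Gamma(c-a-b)\,x}{[\Gamma(c)\Gamma(c-a-b)-\Gamma(c-a)\Gamma(c-b)]\,x+\Gamma(c-a)\Gamma(c-b)}
\quad\text{for all }x\in[0,1].
\]

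First I would clear the denominator (which is nonzero for $x\in[0,1]$: it equals $P_l(x)\cdot\Gamma(c-a)\Gamma(c-b)/\big(\text{something}\big)$, or one checks directly it is a convex combination-type expression that stays positive, using $\Gamma>0$ on $(0,\infty)$ and $P_l(x)>0$ since $P_l$ interpolates the positive increasing function ${}_2F_1$). After clearing denominators the identity becomes
\[
x\big[(\Gamma(c)\Gamma(c-a-b)-\Gamma(c-a)\Gamma(c-b))\,x+\Gamma(c-a)\Gamma(c-b)\big]=\Gamma(c)\Gamma(c-a-b)\,x,
\]
i.e.
\[
(\Gamma(c)\Gamma(c-a-b)-\Gamma(c-a)\Gamma(c-b))\,x^2+(\Gamma(c-a)\Gamma(c-b)-\Gamma(c)\Gamma(c-a-b))\,x=0,
\]
that is $(\Gamma(c)\Gamma(c-a-b)-\Gamma(c-a)\Gamma(c-b))\,x(x-1)=0$. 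Since $x(x-1)$ is not identically zero on $[0,1]$, this holds for all $x\in[0,1]$ precisely when $\Gamma(c)\Gamma(c-a-b)-\Gamma(c-a)\Gamma(c-b)=0$, which is the claimed condition.

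For the converse direction there is nothing more to do: if $\Gamma(c)\Gamma(c-a-b)=\Gamma(c-a)\Gamma(c-b)$, then substituting into the formula for $R_r(x)$ the bracket collapses to $\Gamma(c)\Gamma(c-a-b)\,x/[\Gamma(c-a)\Gamma(c-b)]=x$, so $R_r(x)=1+\frac{b}{c-b}x=R_l(x)$; alternatively this is immediate from the displayed second line of the derivation of $R_r$, where the numerator and denominator become proportional. It is also worth remarking (though not strictly needed) that both sides already agree at the interpolation nodes $x_0=0$ and $x_1=1$ — as noted right before the theorem, $R_r(x_0)=1=R_l(x_0)$ and $R_r(x_1)=c/(c-b)=R_l(x_1)$ — which is why the discrepancy polynomial above has exactly the factor $x(x-1)$.

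There is no real obstacle here; the only points requiring a word of care are (i) confirming that the denominator of $R_r$ does not vanish on $[0,1]$ so that the "clear denominators" step is reversible — this follows from positivity of the gamma function on $(0,\infty)$ together with the positivity and monotonicity of ${}_2F_1[a,b;c;x]$ on $[0,1]$ from Lemma~\ref{lem-AVV} (equivalently, $P_l(x)\ge 1>0$ on $[0,1]$), and (ii) keeping track of the background hypotheses $c-a-b>0$, $c-b>a$, and $c\notin\{0,-1,-2,\ldots\}$ under which all the gamma arguments are positive and the expressions for $R_l$, $R_r$ are valid. Given those, the proof is the one-line factorization above.
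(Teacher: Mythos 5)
Your proof is correct and follows essentially the same route as the paper, which simply reads the equivalence off the explicit formulas for $R_l(x)$ and $R_r(x)$ derived just before the theorem statement. Your version is in fact slightly more careful than the paper's (which offers no written proof beyond the preceding remark), since you verify the nonvanishing of the denominator of $R_r$ on $[0,1]$ and exhibit the factorization $(\Gamma(c)\Gamma(c-a-b)-\Gamma(c-a)\Gamma(c-b))\,x(x-1)=0$ explicitly.
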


\section{Concluding Remarks and Future Scope}
Recall that, in this paper, we use some standard interpolation techniques to approximate the hypergeometric function 
$$
{}_2F_1[a,b;c;x]=1+\frac{ab}{c}x+\frac{a(a+1)b(b+1)}{c(c+1)}\frac{x^2}{2!}+\cdots
$$
for a range of parameter triples $(a,b,c)$ on the interval $0<x<1$. 
Some of the familiar hypergeometric functional identities and asymptotic behavior of the hypergeometric function 
at $x=1$ played crucial roles in deriving the formula for such approximations.
One can expect similar formulae using other well-known interpolations and obtain better approximation
for the hypergeometric function, however, we discuss such results in the upcoming manuscript(s). 
Different numerical methods for the computation of the confluent and Gauss hypergeometric functions
are studied recently in \cite{POP17}. Such investigation may be extended to the $q$-analog of 
the hypergeometric functions, namely, Heine's basic hypergeometric functions; for instance refer to \cite{CF11}
for similar discussions.

We also focus on error analysis of the numerical approximations leading to monotone properties of 
quotient of gamma functions in parameter triples $(a,b,c)$. Monotone properties of the gamma and its quotients
in different forms
are of recent interest to many researchers; see for instant \cite{Alz93,AQ97,BI86,CZ14,GL01,Gau59,LWZ17,MD17}. 
In this paper, we also studied and 
stated a conjecture (see Conjecture~\ref{3.2conj}) related to monotone properties of quotient of
gamma functions to analyse the error estimate of the numerical approximations under consideration.

Finally, an application to continued fractions of Gauss is also discussed. Approximations of continued fractions
in different forms are also attracted to many researchers; see \cite{LLQ17,LSM16} and references therein 
for some of the recent works.

\bigskip
\noindent
{\bf Acknowledgement.} This work was carried out when the first author was in internship 
at IIT Indore during the summer 2014. 
The authors would like to thank the referee and the editor
for their valuable remarks on this paper.

\end{document}